\documentclass[10pt]{amsart}
 
\usepackage{amsthm,amsmath,amssymb}
\usepackage{graphicx}
\graphicspath{}
\RequirePackage{hyperref}
\RequirePackage{hypernat}

\newtheorem{theorem}{Theorem}

\newtheorem{lemma}[theorem]{Lemma} 
\newtheorem{proposition}[theorem]{Proposition}
\newtheorem{corollary}[theorem]{Corollary}

\theoremstyle{definition} 
\newtheorem{definition}{Definition}
\theoremstyle{remark} 
\newtheorem{example}{Example}

\numberwithin{equation}{section}
\numberwithin{theorem}{section}
\numberwithin{example}{section}
\numberwithin{definition}{section}

\numberwithin{figure}{section}

\newcommand{\an}{{\rm An}}       
\newcommand{\pa}{{\rm pa}}       

\newcommand{\lhs}[1]{\mathrm{Left}\left({#1}\right)}
\newcommand{\rhs}[1]{\mathrm{Right}\left({#1}\right)}
\newcommand{\tp}[1]{\mathrm{Top}\left({#1}\right)}
\newcommand{\tops}{\mathrm{Tops}}

\newcommand{\bi}{\leftrightarrow}

\makeatletter
\def\@strippedMR{}
\def\@scanforMR#1#2#3\endscan{%
  \ifx#1M\ifx#2R\def\@strippedMR{#3}%
  \else\def\@strippedMR{#1#2#3}%
  \fi\fi}
\renewcommand\MR[1]{\relax
  \ifhmode\unskip\spacefactor3000 \space\fi
  \@scanforMR#1\endscan
  MR\MRhref{\@strippedMR}{\@strippedMR}}
\makeatother


\title[Causal interpretation of mixed graphs]{On the causal
  interpretation of acyclic mixed graphs under multivariate normality}

\author[C.~Fox]{Christopher J. Fox} 
\address{Department of Statistics, The University of Chicago, Chicago,
  IL, U.S.A.}
\email{chrisfox@uchicago.edu}

\author[A.~K\"{a}ufl]{Andreas K\"{a}ufl} 
\address{Institute for Mathematics, University of Augsburg, Augsburg, Germany}
\email{andreas.kaeufl@googlemail.com}

\author[M.~Drton]{Mathias Drton} 
\address{Department of Statistics, University of Washington, Seattle,
  WA, U.S.A.}
\email{md5@uw.edu}

\begin{document}
\begin{abstract}
  In multivariate statistics, acyclic mixed graphs with directed and
  bidirected edges are widely used for compact representation of
  dependence structures that can arise in the presence of hidden
  (i.e., latent or unobserved) variables.  Indeed, under multivariate
  normality, every mixed graph corresponds to a set of covariance
  matrices that contains as a full-dimensional subset the covariance
  matrices associated with a causally interpretable acyclic digraph.
  This digraph generally has some of its nodes corresponding to hidden
  variables.  We seek to clarify for which mixed graphs there exists
  an acyclic digraph whose hidden variable model coincides with the
  mixed graph model.  Restricting to the tractable setting of chain
  graphs and multivariate normality, we show that decomposability of
  the bidirected part of the chain graph is necessary and sufficient
  for equality between the mixed graph model and some hidden variable
  model given by an acyclic digraph.
\end{abstract}

\keywords{Covariance matrix, graphical model, hidden variable,
  multivariate normal distribution, latent variable, structural
  equation model}

\maketitle

\section{Introduction}
\label{sec:introduction}

Acyclic digraphs are standard representations of causally
interpretable statistical models in which the involved random
variables are noisy functions of each other, with all noise terms
being independent.  In generalization, acyclic mixed graphs with
directed and bidirected edges are widely used for compact
representation of causal structure when important variables are hidden
(that is, unobserved)
\cite{pearl:2009,spirtes:2000,koster:2002,richardson:2002,
  wermuth:2011}.  Such mixed graphs are also known as path diagrams in
the field of structural equation modeling
\cite{bollen:1989,koster:1999}.  The graphs provide, in particular, a
framework for statistical model selection in the presence of hidden
variables \cite{colombo:2012,spirtes:2000,silva:2009}.

Under joint multivariate normality, it is well-known that for every
acyclic mixed graph there exists an acyclic digraph, generally with
additional vertices, such that the statistical model associated with
the digraph is a full-dimensional subset of the model determined by
the mixed graph.  Here, nodes that appear in the digraph but not in
the mixed graph are treated as hidden variables and marginalized over.
In this paper we ask which mixed graphs induce a statistical model
that is not only a superset but equal to a hidden variable model given
by some acyclic digraph.  We focus on the particularly tractable class
of chain graphs, that is, mixed graphs without semi-directed cycles.
Our main result characterizes the chain graphs for which there exists
an acyclic digraph with hidden variable model equal to the chain graph
model.  We begin by formally introducing the concerned graphical
models and stating the precise form of the problem and main result.

Let $\mathcal{D}=(V,E)$ be an acyclic digraph with finite vertex set
$V$ and edge set $E\subseteq V\times V$.  We denote possible edges
$(u,v)$ by $u\to v$.  Let $\mathbb{R}^E$ be the set of matrices
$\Lambda=(\lambda_{uv})\in\mathbb{R}^{V\times V}$ that are supported
on $E$, that is, $\lambda_{uv}=0$ if $u\to v\not\in E$ or $u=v$.  For
any $\Lambda\in\mathbb{R}^E$, the matrix $I-\Lambda$ is invertible
because $\det(I-\Lambda)=1$.  Throughout, $I$ denotes the
identity matrix with size determined by the context.

\begin{definition}
  \label{def:dag-model}
  The \emph{Gaussian graphical model} $\mathbf{N}(\mathcal{D})$ is the
  family of all multivariate normal distributions
  $\mathcal{N}(\mu,\Sigma)$ on $\mathbb{R}^V$ that have covariance
  matrix
  \begin{equation}
    \label{eq:cov-mx}
    \Sigma = (I-\Lambda)^{-T}\Omega(I-\Lambda)^{-1}
  \end{equation}
  with $\Lambda\in\mathbb{R}^E$ and $\Omega\in\mathbb{R}^{V\times V}$
  diagonal with positive diagonal entries.
\end{definition}

The motivation for consideration of the model
$\mathbf{N}(\mathcal{D})$ becomes clearer through the following
construction.  Let $\epsilon=(\epsilon_v)_{v\in V}$ be a multivariate
normal random vector with covariance matrix $\Omega=(\omega_{uv})$,
and let $\pa(v)=\{u\,:\, u\to v\in E\}$ be the set of \emph{parents}
of vertex $v$.  Define the random vector $X=(X_v)_{v\in V}$ to be the
solution of the linear equation system
\begin{equation}
  \label{eq:sem}
  X_v = \lambda_{0v}+ \sum_{u\in\pa(v)} \lambda_{uv} X_u+\epsilon_v,
  \qquad v\in V.
\end{equation}
Then $X$ is multivariate normal with covariance matrix as
in~(\ref{eq:cov-mx}), where $\Lambda\in\mathbb{R}^E$ contains the
coefficients in~the equations in (\ref{eq:sem}), which are also known
as \emph{structural equations} \cite{bollen:1989}.  The distributions
in $\mathbf{N}(\mathcal{D})$ thus arise when the considered random
variables are related via noisy functional, or in other words, causal
relationships.  We refer the reader to \cite{lauritzen:1996} or
\cite[Chap.~3]{oberwolfach} for general background on graphical
models, and to \cite{pearl:2009} and \cite{spirtes:2000} for details
on causal interpretation.

While noisy functional relationships are often natural for modelling
dependences among observed random variables $X_v$, $v\in V$, many
applications face the problem that additional variables may
appear in the functions.  The relevant acylic digraph
$\mathcal{D}=(U,E)$ has then a larger vertex set $U\supseteq V$ and
edge set $E\subseteq U\times U$.  The nodes in $U\setminus V$
correspond to so-called hidden (or latent) variables that remain
unobserved.  The pair $(\mathcal{D},V)$ determines a \emph{hidden
  variable model} $\mathbf{N}_V(\mathcal{D})$ comprising the
normal distributions on $\mathbb{R}^V$ that arise as $V$-marginal of
a distribution in $\mathbf{N}(\mathcal{D})$.

\begin{figure}[t]
  \centering
  (a) 
  \includegraphics[scale=0.25]{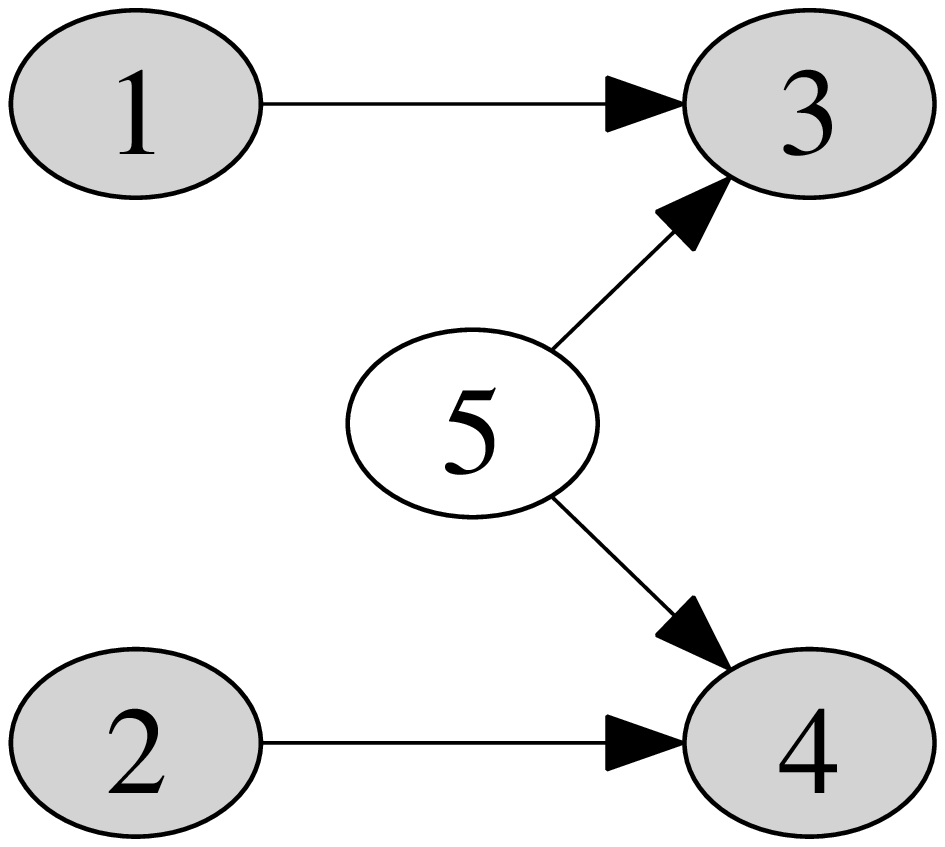} 
  \hspace{1cm}
  (b) 
  \includegraphics[scale=0.25]{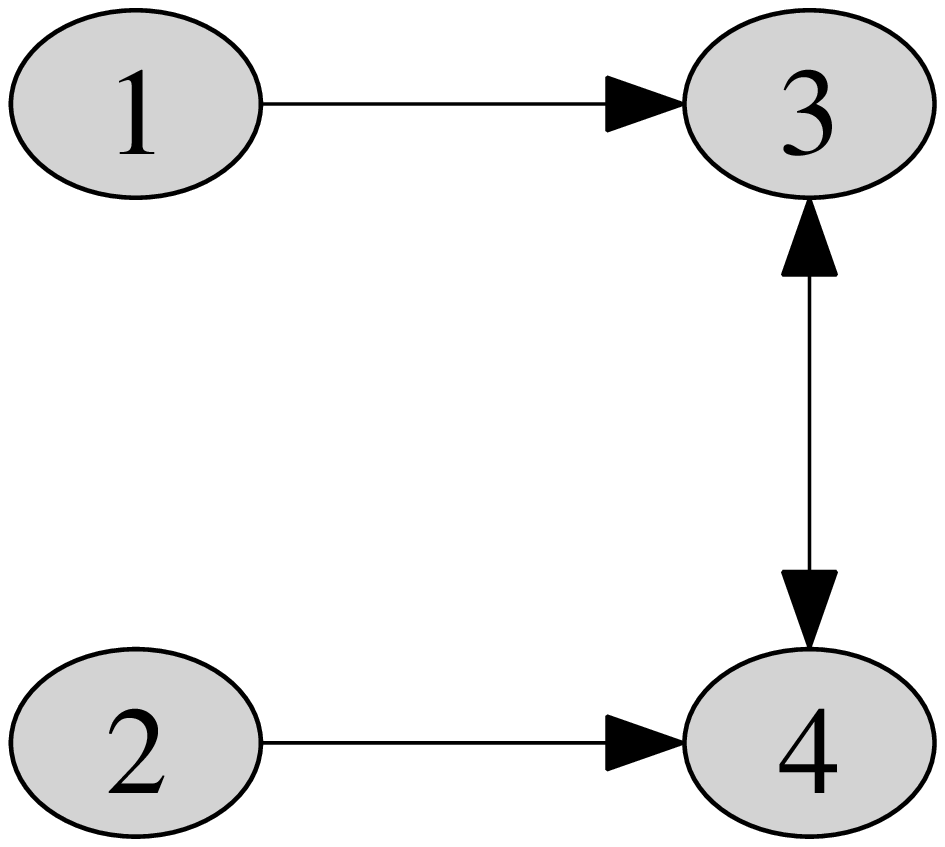}
  \caption{(a) An acyclic digraph $\mathcal{D}$.  (b)
    Mixed graph for the marginal distribution of $(X_1,\dots,X_4)$.}
  \label{fig:sur}
\end{figure}

\begin{example}
  Consider the acyclic digraph $\mathcal{D}=(U,E)$ from
  Figure~\ref{fig:sur}(a) with vertex set $U= \{1,\dots,5\}$.  If we
  only observe the random variables indexed by the nodes in
  $V=\{1,\dots,4\}$, then the covariance matrices of the normal
  distributions in $\mathbf{N}_V(\mathcal{D})$ have the form
  \begin{equation}
    \label{eq:sur_cov_mx_hidden}
    \begin{pmatrix}
      \omega_{11} & 0 & \lambda_{13}\omega_{11} & 0 \\
      0 & \omega_{22} & 0 & \lambda_{24}\omega_{22} \\
      \lambda_{13}\omega_{11} & 0
      &\lambda_{13}^2\omega_{11}+\omega_{33}+\lambda_{53}^2\omega_{55}
      & \lambda_{53}\lambda_{54}\omega_{55} \\ 
      0 & \lambda_{24}\omega_{22} &
      \lambda_{53}\lambda_{54}\omega_{55}&
      \lambda_{24}^2\omega_{22}+\omega_{44}+\lambda_{54}^2\omega_{55}
    \end{pmatrix},
  \end{equation}
  where $\omega_{11},\dots,\omega_{55}>0$ are the variances of the
  error terms in (\ref{eq:sem}).  The four edges in $\mathcal{D}$
  correspond to the coefficients
  $\lambda_{13},\lambda_{24},\lambda_{53},\lambda_{54}\in\mathbb{R}$.
\end{example}

Many acylic digraphs give the same hidden variable model over a
particular set of observed nodes $V$.  For instance, if we add a sixth
node and the edges $3\leftarrow 6 \to 4$ to the graph $\mathcal{D}$
from Figure~\ref{fig:sur}(a), then the resulting graph $\mathcal{D}'$
satisfies $\mathbf{N}_V(\mathcal{D}')=\mathbf{N}_V(\mathcal{D})$ for
$V=\{1,\dots,4\}$.  Due to this fact, it is often useful to represent
hidden variable models by mixed graphs whose nodes correspond only to
observed variables but whose edge set may contain a second type of
edge.

Mixed graphs are triples $\mathcal{G}=(V,D,B)$ that consist of a
finite vertex set $V$ and two sets of edges $D,B\subseteq V\times V$.
The set $D$ contains directed edges, denoted again by $u\to
v$.  The pairs in $B$ are bidirected edges, which we write as $v\bi
w$.  The bidirected edges do not have an orientiation, that is, $v\bi
w\in B$ if and only if $w\bi v\in B$.  We tacitly assume absence of
self-loops, that is, $(v, v)\not\in D\cup B$ for all $v\in V$.  A
mixed graph is \emph{acyclic} if its \emph{directed part} $(V,D)$ is
an acyclic digraph.

For an acyclic mixed graph $\mathcal{G}=(V,D,B)$, let $\mathbb{R}^D$
be the set of $V\times V$-matrices $\Lambda=(\lambda_{uv})$ supported
on $D$.  Similarly, let $\mathit{PD}(B)$ be the cone of positive
definite $V\times V$-matrices $\Omega=(\omega_{uv})$ supported on $B$,
with the distinction that the diagonal entries are not constrained to
be zero.  Mixed graph models are defined in analogy to the models
given by digraphs, the sole difference being the fact that the error
terms in (\ref{eq:sem}) need no longer be mutually independent.

\begin{definition}
  \label{def:mixed-graph-model}
  The \emph{Gaussian mixed graph model} $\mathbf{N}(\mathcal{G})$ is
  the family of all multivariate normal distributions
  $\mathcal{N}(\mu,\Sigma)$ on $\mathbb{R}^V$ that have covariance
  matrix
  \begin{equation*}
    \label{eq:cov-mx-mixed}
    \Sigma = (I-\Lambda)^{-T}\Omega(I-\Lambda)^{-1}
  \end{equation*}
  with $\Lambda\in\mathbb{R}^D$ and $\Omega\in\mathit{PD}(B)$.
\end{definition}

A mixed graph model does not have an \emph{a priori} causal
interpretation because no causal mechanism is specified for generating
the dependences that may exist among the error terms $\epsilon_v$,
$v\in V$.  However, as we suggested above, such a causal
interpretation can be given via hidden variables.  

\begin{example}
  The distributions in the model $\mathbf{N}(\mathcal{G})$ defined by
  the mixed graph $\mathcal{G}$ in Figure~\ref{fig:sur}(b) have
  covariance matrix
  \begin{equation}
    \label{eq:sur_cov_mx}
    \begin{pmatrix}
      \omega_{11} & 0 & \lambda_{13}\omega_{11} & 0\\
      0 & \omega_{22} & 0 & \lambda_{24}\omega_{22}\\
      \lambda_{13}\omega_{11} & 0 & \omega_{33} +
      \lambda_{13}^2\omega_{11} & \omega_{34}\\
      0 & \lambda_{24}\omega_{22} & \omega_{34} & \omega_{44} +
      \lambda_{24}^2\omega_{22} 
    \end{pmatrix},
  \end{equation}
  where $\lambda_{13},\lambda_{24}\in\mathbb{R}$ and the five
  parameters $\omega_{ij}$ form a positive definite $4\times 4$ matrix
  with all off-diagonal entries zero except for $\omega_{34}$.  It is
  not difficult to show that the matrix in~(\ref{eq:sur_cov_mx}) can
  always be written as in (\ref{eq:sur_cov_mx_hidden}), and vice
  versa.  Hence, $\mathbf{N}(\mathcal{G})=\mathbf{N}_V(\mathcal{D})$
  for the graph $\mathcal{D}$ from Figure~\ref{fig:sur}(a) and the
  nodes in $V=\{1,\dots,4\}$.
\end{example}

The two graphs in Figure~\ref{fig:sur} are related through a
well-known general construction.  For each bidirected edge $u\bi v$ in
a mixed graph $\mathcal{G}$ introduce a new node $h_{\{u,v\}}$.  Then
replace the edge $u\bi v$ by the two edges $u\leftarrow h_{\{u,v\}}
\to v$.  The resulting digraph $\mathcal{D}$ has been called the
bidirected subdivision in \cite{sullivant:2010} and the canonical DAG
(short for acyclic digraph) in \cite{richardson:2002}; it also
underlies the semi-Markovian models of \cite{pearl:2009}.  As used in
\cite{sullivant:2010}, the construction yields for every mixed graph
$\mathcal{G}=(V,D,B)$ a digraph $\mathcal{D}=(U,E)$ with $V\subseteq U$
such that $\mathbf{N}_V(\mathcal{D})$ is a full-dimensional subset of
$\mathbf{N}(\mathcal{G})$; to make this a formal statement, identify
each model with the set of covariance matrices of its probability
distributions.  Hence, every mixed graph model can be regarded as a
``closure'' of the causal (hidden variable) model defined by its
canonical digraph.  

While the mixed graph from Figure~\ref{fig:sur}(b) and its canonical
digraph in Figure~\ref{fig:sur}(a) define exactly the same model for
the random vector $(X_1,\dots,X_4)$, it is known from the examples
in \cite{richardson:2002} and \cite{drtonyu:2010} that the canonical
digraph sometimes only defines a proper submodel.  It remains an open
problem to characterize the acyclic mixed graphs that have the
following strict causal interpretation.

\begin{definition}
  \label{def:strict-causal}
  An acyclic mixed graph $\mathcal{G}=(V,D,B)$ is \emph{strictly
    Gaussian causal} if there exists an acyclic digraph
  $\mathcal{D}=(U,E)$ on $U\supseteq V$ with
  $\mathbf{N}_V(\mathcal{D})=\mathbf{N}(\mathcal{G})$.
\end{definition}

We would like to emphasize that the acyclic digraphs in Definition~\ref{def:strict-causal} are entirely arbitrary.  Hence, unlike canonical DAGs, they may feature hidden nodes in $U\setminus V$ with more than two children.

The following result is known about graphs without directed edges.
Recall that the \emph{bidirected part} $(V,B)$ is decomposable (or
chordal or triangulated) if it has no induced cycles of length more
than three. 

\begin{theorem}[\cite{drtonyu:2010}]
  \label{thm:drtonyu}
  If $\mathcal{G}=(V,\emptyset,B)$ is a mixed graph without directed
  edges, then it is strictly Gaussian causal if $(V,B)$ is
  decomposable.
\end{theorem}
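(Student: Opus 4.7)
\emph{Overall plan.} I will construct, for the decomposable bidirected graph $\mathcal{G}=(V,\emptyset,B)$, an acyclic digraph $\mathcal{D}=(U,E)$ with $V\subseteq U$ whose hidden-variable model coincides with $\mathbf{N}(\mathcal{G})$. Let $C_1,\dots,C_r$ be the maximal cliques of $(V,B)$. For each $C_i$ introduce $|C_i|$ fresh hidden root vertices $h_{i,1},\dots,h_{i,|C_i|}$, each equipped with directed edges $h_{i,j}\to v$ for all $v\in C_i$, and let $\mathcal{D}$ be the resulting acyclic digraph. The inclusion $\mathbf{N}_V(\mathcal{D})\subseteq\mathbf{N}(\mathcal{G})$ is immediate: for $u,w\in V$ with $u\bi w\notin B$ no maximal clique contains both, so $u$ and $w$ share no common ancestor in $\mathcal{D}$, and the structural equations~\eqref{eq:sem} force $\mathrm{Cov}(X_u,X_w)=0$.

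\emph{Reduction to a clique decomposition lemma.} The reverse inclusion rests on the assertion that every $\Sigma\in\mathit{PD}(B)$ admits a representation $\Sigma=\sum_{i=1}^{r}A_i$ with each $A_i$ positive semidefinite and supported on $C_i$. Granted this, I work with $\Sigma-\epsilon I$ in place of $\Sigma$ for some $\epsilon>0$ small enough that $\Sigma-\epsilon I\in\mathit{PD}(B)$, obtaining $\Sigma=\epsilon I+\sum_i A_i$. Eigendecompose each $A_i$ on $C_i$ as $A_i=\sum_j\mu_{i,j}b_{i,j}b_{i,j}^{T}$; the vectors $b_{i,j}\in\mathbb{R}^V$ are automatically supported on $C_i$. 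Set $\omega_v=\epsilon$ for $v\in V$, and for the hidden nodes assign $\omega_{h_{i,j}}=\mu_{i,j}$ with $\lambda_{h_{i,j},v}=(b_{i,j})_v$ whenever $\mu_{i,j}>0$; hidden nodes with vanishing $\mu_{i,j}$ are given arbitrary positive variance and zero outgoing coefficients. Evaluating~\eqref{eq:cov-mx} on $\mathcal{D}$ then recovers $\epsilon I+\sum_i A_i=\Sigma$.

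\emph{Proof of the lemma and main obstacle.} I would establish the clique decomposition by induction on $r$. Choose a leaf $C_1$ of a clique tree for $(V,B)$ with separator $S$ to its unique tree-neighbor and residual $R=C_1\setminus S$. By the running intersection property the $B$-neighbors of any vertex in $R$ lie entirely in $C_1$, so $\Sigma_{R,V\setminus R}$ is supported on $R\times S$. Define $A_1$ to vanish outside $C_1$ and on $C_1\times C_1$ to equal
\begin{equation*}
\begin{pmatrix}\Sigma_{R,R} & \Sigma_{R,S}\\ \Sigma_{S,R} & \Sigma_{S,R}\Sigma_{R,R}^{-1}\Sigma_{R,S}\end{pmatrix}.
\end{equation*}
This matrix is positive semidefinite (its Schur complement with respect to $\Sigma_{R,R}$ vanishes) and supported on $C_1$. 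Moreover $\Sigma-A_1$ has zero entries on all rows and columns indexed by $R$, and its restriction to $V\setminus R$ equals the Schur complement of $\Sigma_{R,R}$ in $\Sigma$, hence is positive definite and supported on the induced subgraph on $V\setminus R$; this subgraph remains decomposable with maximal cliques $C_2,\dots,C_r$, so the inductive hypothesis supplies the remaining $A_i$. The main technical obstacle lies precisely in this lemma: one Schur-complement subtraction must simultaneously yield a PSD summand supported on the chosen clique and leave a PD residual supported on a decomposable graph with one fewer maximal clique. It is decomposability, in the guise of the running intersection property, that makes this delicate cancellation work---without it, the subtraction would create nonzero entries off the bidirected-edge support and the induction would fail.
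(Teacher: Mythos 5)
Your proof is correct, and it is worth noting how it relates to the paper's treatment. The paper does not prove Theorem~\ref{thm:drtonyu} at all: it cites \cite{drtonyu:2010} for the key matrix-decomposition fact, and the construction it works with (the clique digraph of Definition~\ref{def:clique-graph}) attaches one hidden root to \emph{every} clique of $(V,B)$ of size at least two, so that the reachable covariance matrices are exactly those of the form $\Delta_{11}+\Gamma_{21}^{T}\Delta_{22}\Gamma_{21}$ in~(\ref{eq:clique-omega}), i.e.\ positive diagonal plus one rank-one PSD summand per clique; that every matrix in $\mathit{PD}(B)$ has such a representation is the imported ingredient. You do two things differently. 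First, your digraph uses $|C_i|$ hidden roots per \emph{maximal} clique $C_i$, which is enough because a full-rank PSD block on $C_i$ can absorb the contributions of all sub-cliques; this is a leaner graph (linearly many hidden nodes versus potentially exponentially many in the paper's $\mathcal{C}_2$). Second, and more substantively, you actually prove the decomposition lemma rather than citing it, via a clique-tree induction: peel off a leaf clique, subtract the PSD matrix whose Schur complement with respect to $\Sigma_{R,R}$ vanishes, and observe that the residual is the Schur complement of $\Sigma_{R,R}$ in $\Sigma$, hence positive definite and still supported on the (decomposable) induced subgraph because the separator $S$ is a clique and the running intersection property confines $\Sigma_{R,V\setminus R}$ to the $R\times S$ block. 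The $\epsilon I$ perturbation correctly handles the requirement that the hidden and observed noise variances be strictly positive, and the zero-eigenvalue hidden nodes are dealt with properly. I checked the induction step (base case, support of the residual, maximal cliques of the reduced graph being $C_2,\dots,C_r$) and see no gap; your argument is a valid self-contained proof of the theorem, whereas the paper's is a construction plus a citation.
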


In this paper we first clarify that the hidden variable construction
underlying the proof of Theorem~\ref{thm:drtonyu} also yields that this
condition is sufficient in general.

\begin{theorem}
  \label{thm:main1}
  If an acyclic mixed graph $\mathcal{G}=(V,D,B)$ has a decomposable
  bidirected part $(V,B)$, then $\mathcal{G}$ is strictly Gaussian
  causal.
\end{theorem}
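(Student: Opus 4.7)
The plan is to reduce to the purely bidirected case by superimposing the directed edges of $D$ on top of a digraph that already realizes the bidirected part. First, apply Theorem~\ref{thm:drtonyu} to the mixed graph $(V,\emptyset,B)$ to obtain an acyclic digraph $\tilde{\mathcal{D}}=(U,\tilde{E})$ with $U\supseteq V$ and $\mathbf{N}_V(\tilde{\mathcal{D}})=\mathbf{N}(V,\emptyset,B)$. Examining the hidden-variable construction used to prove Theorem~\ref{thm:drtonyu} (essentially attaching one hidden source per clique in a perfect elimination ordering of $(V,B)$), we may assume that every edge of $\tilde{E}$ points from a node of $U\setminus V$ to a node of $V$, so that $V$ is a set of sinks in $\tilde{\mathcal{D}}$.

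Next, form $\mathcal{D}=(U,\tilde{E}\cup D)$. The digraph $\mathcal{D}$ is acyclic: a directed cycle visiting $U\setminus V$ would need an edge leaving $V$ into $U\setminus V$, which no edge in $\tilde{E}\cup D$ provides; any cycle confined to $V$ would use only edges of $D$, contradicting the assumed acyclicity of $(V,D)$.

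To check $\mathbf{N}_V(\mathcal{D})=\mathbf{N}(\mathcal{G})$, write the structural equations~(\ref{eq:sem}) for $\mathcal{D}$ and, for each $v\in V$, separate the parental contributions according to edge type:
\begin{equation*}
  X_v=\sum_{u\in\pa_D(v)}\lambda_{uv}X_u+Z_v,\qquad
  Z_v:=\sum_{h\in\pa_{\tilde{E}}(v)}\tilde\lambda_{hv}X_h+\epsilon_v.
\end{equation*}
Because no vertex of $V$ has an out-edge in $\tilde{\mathcal{D}}$, the hidden subsystem $(X_h)_{h\in U\setminus V}$ satisfies exactly the structural equations of $\tilde{\mathcal{D}}$ and is unaffected by the $D$-equations. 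Hence $Z=(Z_v)_{v\in V}$ has the same joint distribution as $X_V$ in the pure hidden-variable model on $\tilde{\mathcal{D}}$, and by the choice of $\tilde{\mathcal{D}}$ its covariance $\mathrm{Cov}(Z)$ ranges precisely over $\mathit{PD}(B)$. Solving $(I-\Lambda^T)X_V=Z$ yields
\begin{equation*}
  \mathrm{Cov}(X_V)=(I-\Lambda)^{-T}\,\mathrm{Cov}(Z)\,(I-\Lambda)^{-1},
\end{equation*}
with $\Lambda\in\mathbb{R}^D$ and $\mathrm{Cov}(Z)\in\mathit{PD}(B)$ both arbitrary, which matches the parametrization in Definition~\ref{def:mixed-graph-model}.

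The core of the argument is the clean decoupling of the $\tilde{E}$-layer and the $D$-layer in the structural equations, which hinges on $V$ being a set of sinks in $\tilde{\mathcal{D}}$. I expect the main delicate point to be verifying (or arranging) this sink property for the digraph produced by Theorem~\ref{thm:drtonyu}: without it, directed paths $V\to U\setminus V\to V$ in $\tilde{\mathcal{D}}$ could create new cycles once $D$-edges are added and would make the error vector $Z$ depend on the $D$-layer coefficients, obscuring the desired factorization~(\ref{eq:cov-mx}).
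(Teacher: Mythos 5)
Your proposal is correct and follows essentially the same route as the paper: the digraph you build (one hidden source per non-singleton clique of $(V,B)$, with its edges superimposed on $D$) is exactly the paper's clique digraph, and your decoupling of the structural equations into a $D$-layer acting on the error vector $Z$ is the paper's block-matrix computation of the marginal covariance in disguise, with the same appeal to the result behind Theorem~\ref{thm:drtonyu} to see that $\mathrm{Cov}(Z)$ sweeps out all of $\mathit{PD}(B)$. The sink property you flag as the delicate point does hold, since that construction only adds edges pointing from clique nodes into $V$.
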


We remark that it is meaningful to extend
Definition~\ref{def:mixed-graph-model} to non-acyclic mixed graphs,
restricting the matrices $\Lambda\in\mathbb{R}^D$ to have $I-\Lambda$
invertible.  Then Theorem~\ref{thm:main1} would still apply if
Definition~\ref{def:strict-causal} were changed to allow for cyclic
digraphs.

In general, the decomposability of the bidirected part is not
necessary for strict Gaussian causality of an acyclic mixed graph; see
Example~\ref{ex:decomp-not-necessary}.  However, it is necessary when
$\mathcal{G}=(V,D,B)$ is a \emph{chain graph}, which refers to a mixed
graph without semi-directed cycles.  An $n$-cycle
$(v_1,v_2),(v_2,v_3),\dots,(v_n,v_1)\in D\cup B$ is semi-directed if
at least one of its edges is in $D$; any directed edge
$(v_i,v_{i+1})\in D$ on this cycle is traversed in the same
orientation $v_i\to v_{i+1}$.   We remark that the statistical interpretation of chain graphs in \cite{lauritzen:1996} differs from the one in this paper; see also \cite{drton:2009,wermuth:2004}.

\begin{theorem}
  \label{thm:main}
  Suppose the mixed graph $\mathcal{G}=(V,D,B)$ is a chain graph.
  Then $\mathcal{G}$ is strictly Gaussian causal if and only if the
  bidirected part $(V,B)$ is decomposable.
\end{theorem}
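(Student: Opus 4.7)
The ``if'' direction is immediate from Theorem \ref{thm:main1}, so the plan is to establish the ``only if'' direction by contraposition: assuming $(V,B)$ contains a chordless cycle $v_1 \bi v_2 \bi \cdots \bi v_n \bi v_1$ with $n \ge 4$, I will show $\mathcal{G}$ cannot be strictly Gaussian causal. The overall strategy is to reduce to the purely bidirected setting and invoke the converse direction of Drton and Yu's characterization (Theorem \ref{thm:drtonyu}), as established in \cite{drtonyu:2010}.

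First I would carry out two localizations using the chain-graph structure. The chordless cycle lies entirely inside a single chain component $C$, because chain components are the connected components of $(V,B)$; moreover $C$ contains no internal directed edges, so every parent of a $C$-vertex lies in $V\setminus C$. Then, picking a topological ordering of the chain components, I would pass to the induced subgraph on $C$ together with all chain components ancestral to $C$, which is again a chain graph. Strict Gaussian causality is inherited, because the marginal of $\mathbf{N}(\mathcal{G})=\mathbf{N}_V(\mathcal{D})$ on this sub-vertex-set equals both the mixed graph model of the induced subgraph and a further marginal of $\mathcal{D}$. After this reduction we may assume $A:=V\setminus C$ consists only of ancestors of $C$, and consequently the structural equations \eqref{eq:sem} yield $X_C=\Lambda_{AC}^{T}X_A+\varepsilon_C$ with $\varepsilon_C\perp X_A$, so the Schur complement $\Sigma_{CC\mid A}$ equals $\Omega_{CC}$ and ranges over the full cone $\mathit{PD}(B_C)$, where $B_C$ is the restriction of $B$ to $C$.

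The core step would then be to exhibit an acyclic digraph $\mathcal{D}^*$ with $C\subseteq U^*$ such that $\mathbf{N}_C(\mathcal{D}^*)=\mathit{PD}(B_C)$, thereby witnessing strict Gaussian causality for the purely bidirected graph $(C,\emptyset,B_C)$. The idea is to ``absorb'' the conditioning on $X_A$ in the witness digraph $\mathcal{D}$ by a latent-projection-style construction that turns the $A$-vertices into hidden ancestors of $C$, so that the $C$-marginal of $\mathcal{D}^*$ equals the set of Schur complements $\Sigma_{CC\mid A}$ traced out as the parameters of $\mathcal{D}$ vary. The chain-component ordering ensures that no directed $C\to A$ paths exist in $\mathcal{G}$, and via the equality $\mathbf{N}_V(\mathcal{D})=\mathbf{N}(\mathcal{G})$ this non-descendance is inherited by $\mathcal{D}$; this is what allows the conditional on $X_A$ to be realized as an honest DAG marginal model on $C$. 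Because $(C,B_C)$ still contains a chordless cycle of length at least four, the converse direction of Drton and Yu's theorem in \cite{drtonyu:2010} produces the desired contradiction.

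The main obstacle I anticipate lies in constructing $\mathcal{D}^*$: the hidden vertices of $\mathcal{D}$ in $U\setminus V$ may influence $A$ and $C$ simultaneously, so naively relabeling $A$-vertices as hidden need not produce the desired bidirected-only marginal on $C$. The rigorous construction must combine the non-descendance structure forced on $\mathcal{D}$ by the chain-graph hypothesis with the observation that the Gaussian residual $X_C-\mathbb{E}[X_C\mid X_A]$ is itself a linear combination of the noise terms of $\mathcal{D}$'s SEM, so it can be realized as the observed coordinates of a suitable hidden-variable Gaussian SEM on $\mathcal{D}^*$.
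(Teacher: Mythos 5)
The ``if'' direction and the first reduction steps (locating the chordless cycle in a chain component $C$, passing to the ancestral set $A$, and observing that the Schur complements $\Sigma_{C.A}$ of the chain graph model sweep out all of $\mathit{PD}(B_C)$) match the paper, which records exactly this in Lemma~\ref{lem:zero-schur-complement}. But the core of your argument has a genuine gap, in two places. First, you invoke ``the converse direction of Drton and Yu's theorem,'' i.e.\ that a purely bidirected graph with a chordless cycle of length $\ge 4$ is \emph{not} strictly Gaussian causal. No such result is available: Theorem~\ref{thm:drtonyu} is stated (and was proved in \cite{drtonyu:2010}) only in the sufficiency direction, and what \cite{drtonyu:2010} actually shows is that one \emph{particular} parametrization --- the clique-digraph/factor-analytic one --- fails to be surjective onto $\mathit{PD}(B_p)$. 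Ruling out \emph{every} acyclic digraph $\mathcal{D}^*=(U^*,E^*)$ with $U^*\supseteq C$, including ones whose hidden nodes have arbitrary children, is precisely the hard content of Section~\ref{sec:chaingraphs} (the paper stresses that this ``requires arguments about an infinite set of acyclic digraphs''). The bidirected case is a special case of Theorem~\ref{thm:main} with $A=\emptyset$, not a citable input to it, so your plan is circular at the decisive step.

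Second, even granting such a converse, your reduction requires producing a digraph $\mathcal{D}^*$ with $\mathbf{N}_C(\mathcal{D}^*)$ equal to the set of conditional covariance matrices $\{\Sigma_{C.A}\}$ traced out by the witness $\mathcal{D}$. Conditioning is not marginalization: the residual $X_C-\mathbb{E}[X_C\mid X_A]$ is a linear combination of the noise terms of $\mathcal{D}$ whose coefficients are rational functions of the original parameters (they involve $\Sigma_{A,A}^{-1}$), and there is no reason the resulting \emph{set} of covariance matrices is the image of any fixed polynomial DAG parametrization. You flag this as ``the main obstacle'' but do not overcome it, and the paper does not attempt it. Instead, the paper works directly with the set $\mathit{PD}_{[p]}(\mathcal{D}|A)$ of achievable Schur complements and proves (Proposition~\ref{prop:chain-graph-negation}, via the edge set $E_A(1,2)$ and the trek-system analysis in Lemmas~\ref{lem:tops-ancestral-notA}--\ref{thm:Tto1} and Corollary~\ref{cor:AtoA}) that this set is closed under negating the $(1,2)$ and $(2,1)$ entries, whereas $\mathit{PD}(B_p)$ is not, by \cite[Example 5.2]{drtonyu:2010}. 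That closure property is the idea your proposal is missing; without it, or some substitute argument quantifying over all acyclic digraphs, the necessity direction does not go through.
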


A chain graph is simple (that is, $D\cap B=\emptyset$), and the
connected components of its bidirected part are also known as
\emph{chain components}.  Each chain
component induces a subgraph that is bidirected, that is, does not
contain any edge in $D$.  Moreover, for a chain graph, the model
$\mathbf{N}(\mathcal{G})$ can also be defined by conditional
independence constraints among the observed random variables.  We use this fact in our proof of
necessity in Theorem~\ref{thm:main}, which also involves a sign-change
trick from \cite{drtonyu:2010} and results on subdeterminants of the
covariance matrix in~(\ref{eq:cov-mx}) due to \cite{sullivant:2010}.
We remark that proving a mixed graph not to be strictly Gaussian
causal requires arguments about an infinite set of acyclic digraphs.
This is in contrast to many other Markov equivalence problems, where all
considered graphs have the same vertex set
\cite{pearl:1994,drton:2008,zhao:2005,ali:2009,wermuth:test}.

The remainder of the paper is organized as follows.  In
Section~\ref{sec:sufficiency}, we prove Theorem~\ref{thm:main1}.
Section~\ref{sec:preliminaries} reviews background needed for the
proof of Theorem~\ref{thm:main}, which is the topic of
Section~\ref{sec:chaingraphs}.  We conclude with a
discussion of the treated problem in Section~\ref{sec:conclusion}.


\section{Mixed graphs with decomposable bidirected part}
\label{sec:sufficiency}

In this section we prove Theorem \ref{thm:main1} according to which an
acyclic mixed graph $\mathcal{G}=(V,D,B)$ with decomposable bidirected
part $(V,B)$ is strictly Gaussian causal.  Let $\mathcal{C}$ be the
set of all cliques of the part $(V,B)$, where a set $C\subset V$ is a
clique if $u\bi v\in B$ for any two distinct nodes $u,v\in C$.  Let
$\mathcal{C}_2\subset \mathcal{C}$ be the set of cliques that have two
or more elements.  We will use the following construction.

\begin{definition}
  \label{def:clique-graph}
  We define the clique digraph of a mixed graph $\mathcal{G}=(V,D,B)$
  to be the acyclic digraph $\mathcal{D(G)}=(U,E)$ with $U=V\cup
  \mathcal{C}_2$ and
  \[
  E = D\cup \{ h\to v \,:\, h\in\mathcal{C}_2,\, v\in h\}. 
  \]
\end{definition}

The clique digraph contains a new node for every non-singleton clique
in $(V,B)$, and links each new node to all nodes appearing in the
concerned clique.  Figure~\ref{fig:canonical} shows an example.  If
$(V,B)$ contains only cliques of size at most two, then the clique
digraph is equal to the aforementioned bidirected
subdivision/canonical DAG.

\begin{figure}[t]
  \centering
  (a) \hspace{-0cm}
  \includegraphics[scale=0.25]{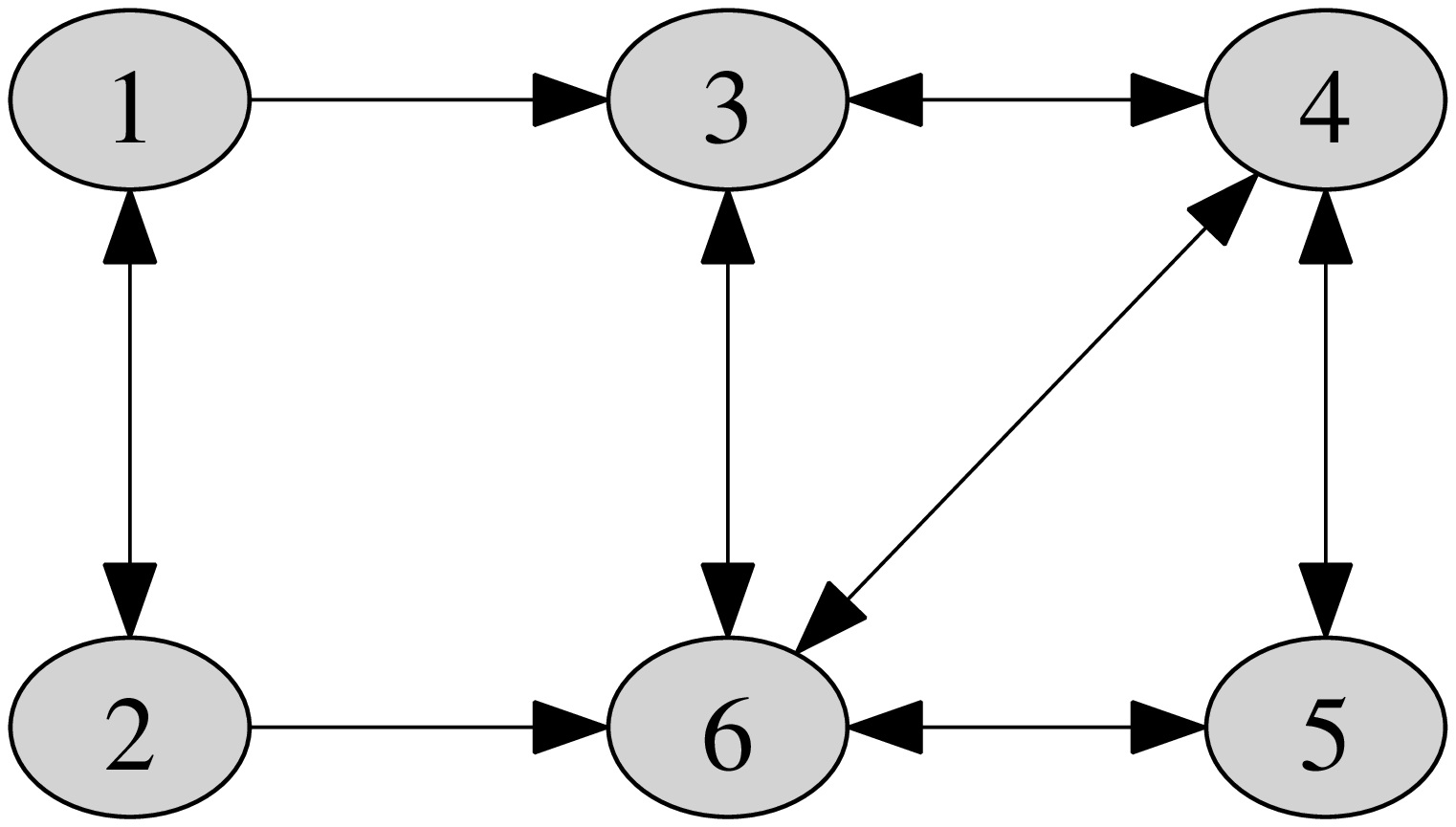} 
  \hspace{0.5cm}
  (b) \hspace{-0cm}
  \includegraphics[scale=0.25]{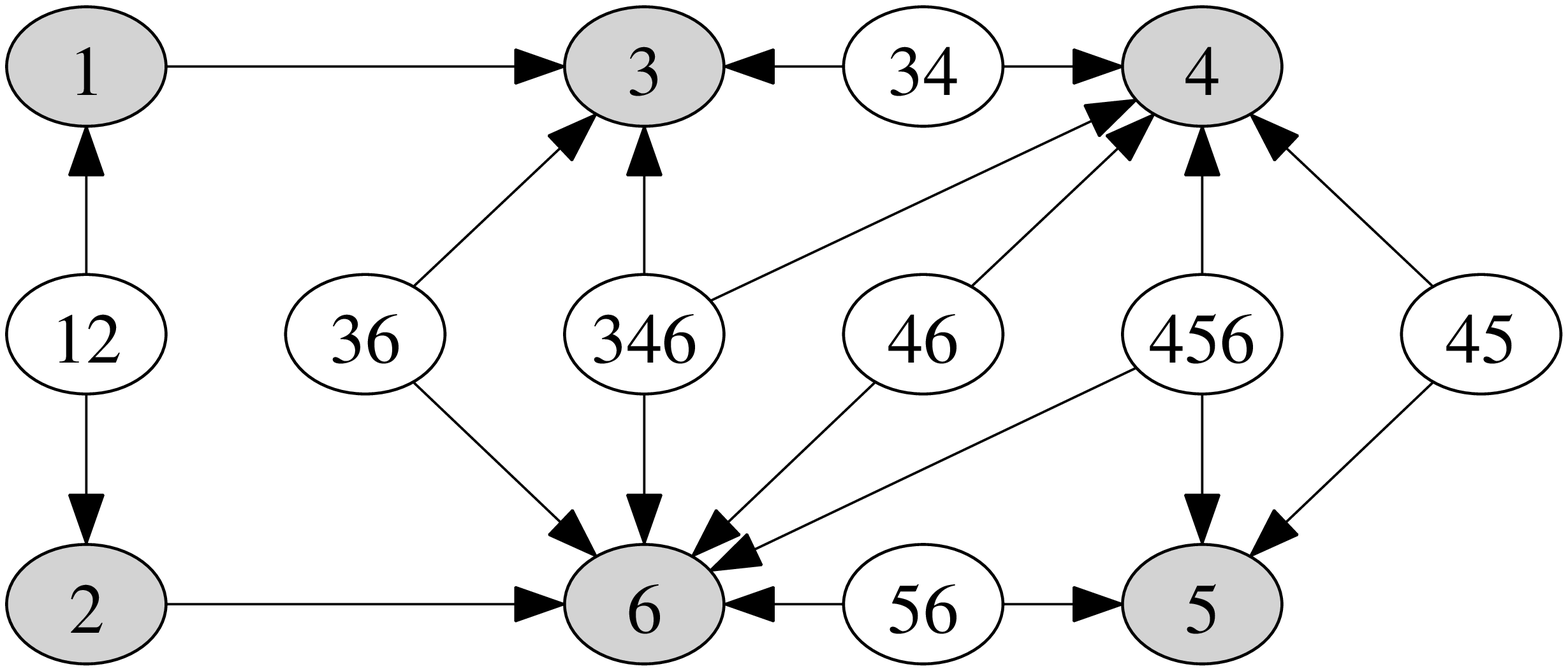}
  \caption{(a) A chain graph $\mathcal{G}$ with decomposable
    bidirected part.  (b) The clique digraph $\mathcal{D(G)}$;
    unshaded nodes correspond to cliques and represent the hidden
    variables.}
  \label{fig:canonical}
\end{figure}

\begin{proof}[Proof of Theorem~\ref{thm:main1}]
  Let $\mathcal{D(G)}=(U,E)$ be the clique digraph of the acyclic
  mixed graph $\mathcal{G}=(V,D,B)$, with $U=V\cup\mathcal{C}_2$.  Let
  $\Gamma$ be a $U\times U$ matrix in $\mathbb{R}^E$, and let $\Delta$
  be a diagonal $U\times U$ matrix with positive diagonal entries.
  Based on the partitioning $U=V\cup\mathcal{C}_2$, we have
  \begin{align*}
    \Gamma&=
    \begin{pmatrix}
      \Gamma_{11} & 0 \\
      \Gamma_{21} & 0 
    \end{pmatrix},
    &
    \Delta&=
    \begin{pmatrix}
      \Delta_{11} & 0 \\
      0 & \Delta_{22}
    \end{pmatrix},
  \end{align*}
  where $\Gamma_{11},\Delta_{11}$ are in $\mathbb{R}^{V\times V}$ and
  $\Delta_{22}$ is in $\mathbb{R}^{\mathcal{C}_2\times \mathcal{C}_2}$.
  Due to the triangular form of $I-\Gamma$, we have
  \begin{equation*}
    (I-\Gamma)^{-1} = 
    \begin{pmatrix}
      (I-\Gamma_{11})^{-1} & 0\\
      \Gamma_{21}(I-\Gamma_{11})^{-1} & I
    \end{pmatrix}.
  \end{equation*}
  The covariance matrices for the distributions in
  $\mathbf{N}_V(\mathcal{D(G)})$ are thus of the form
  \begin{align}
    \nonumber
    \Sigma &= 
    \left[(I-\Gamma)^{-T}\Delta (I-\Gamma)^{-1}\right]_{V\times V}\\
    \label{eq:clique-bidi-part}
    &=(I-\Gamma_{11})^{-T} \left[ 
      \begin{pmatrix}
        I & 0\\
        -\Gamma_{21} & I
      \end{pmatrix}^{-T}
      \Delta
      \begin{pmatrix}
        I & 0\\
        -\Gamma_{21} & I
      \end{pmatrix}^{-1}
    \right]_{V\times V}
    (I-\Gamma_{11})^{-1}\\
    \nonumber
    &=(I-\Gamma_{11})^{-T}
    (\Delta_{11}+\Gamma_{21}^T\Delta_{22}\Gamma_{21})(I-\Gamma_{11})^{-1}. 
  \end{align}

  Since two columns of $\Gamma_{21}$ have disjoint support unless the
  corresponding two nodes are in a clique in $\mathcal{C}_2$, or
  equivalently, unless the two nodes are adjacent in $\mathcal{G}$, the
  matrix
  \begin{equation}
    \label{eq:clique-omega}
    \Delta_{11}+\Gamma_{21}^T\Delta_{22}\Gamma_{21}
  \end{equation}
  is a positive definite matrix in $\mathit{PD}(B)$.  Hence,
  $\mathbf{N}_V(\mathcal{D(G)})\subseteq\mathbf{N}(\mathcal{G})$.
  However, more is true.  According to~(\ref{eq:clique-bidi-part}),
  the matrix in~(\ref{eq:clique-omega}) is a covariance matrix
  associated with the clique digraph of the mixed graph
  $(V,\emptyset,B)$.  The proof of Theorem~\ref{thm:drtonyu} in
  \cite{drtonyu:2010} shows that, for $(V,B)$ decomposable, any matrix
  in $\mathit{PD}(B)$ can be written in the
  form~(\ref{eq:clique-omega}).  We conclude that
  $\mathbf{N}_V(\mathcal{D(G)})=\mathbf{N}(\mathcal{G})$.
\end{proof}

\begin{figure}[t]
  \centering
  (a) \includegraphics[scale=0.25]{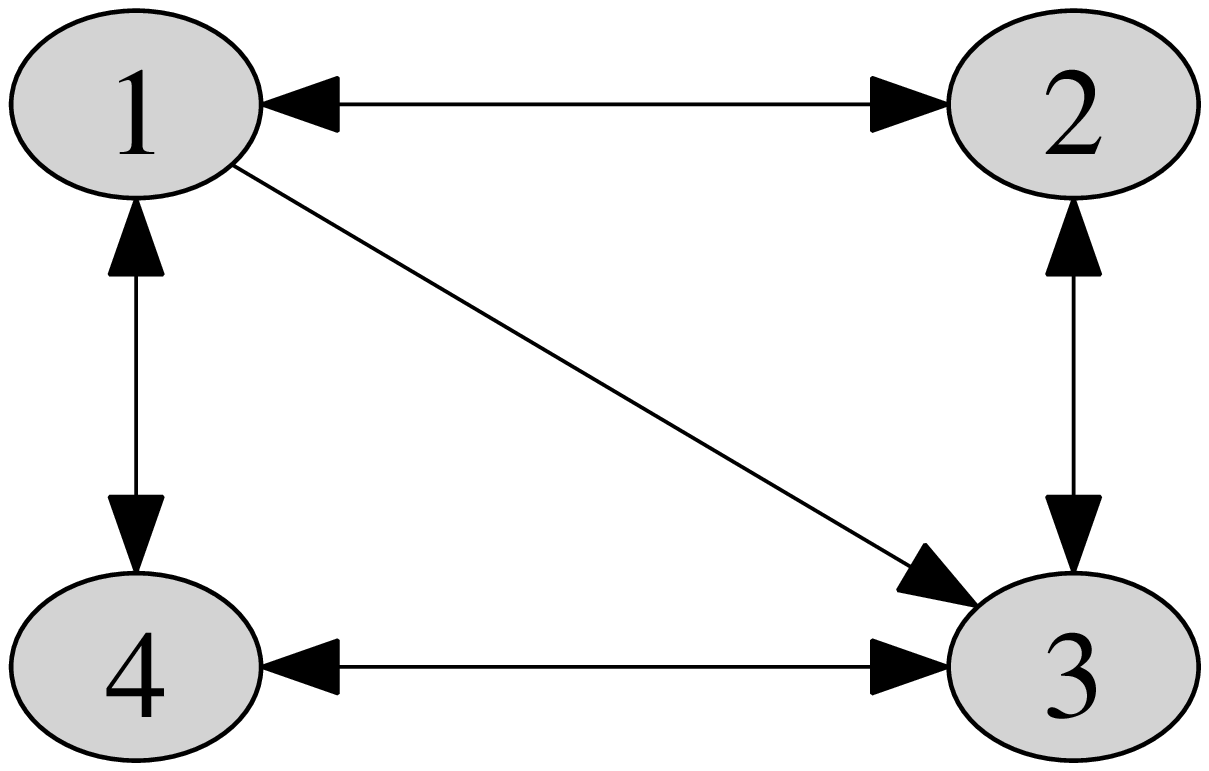}
  \hspace{1cm}
  (b) \includegraphics[scale=0.25]{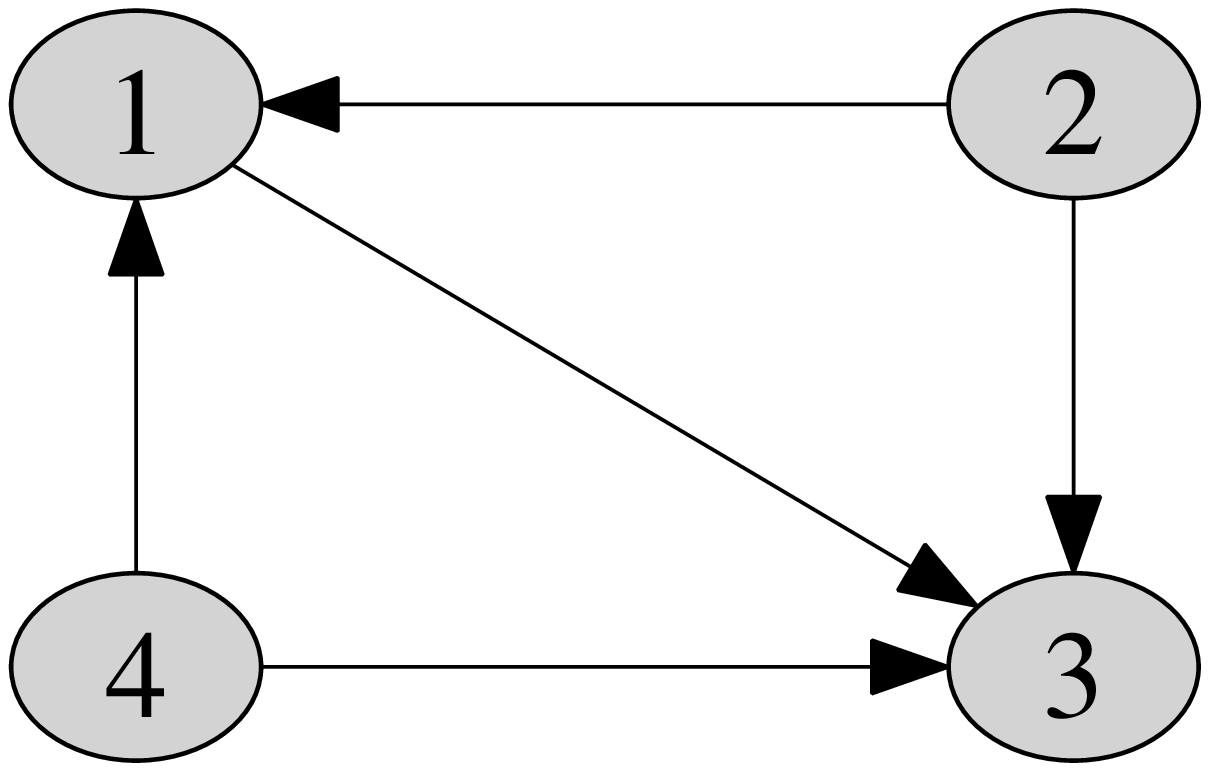}
  \caption{(a) An acyclic mixed graph that has non-decomposable
  bidirected part 
    but is strictly Gaussian causal.  (b) An acyclic digraph
    determining the same model. }
  \label{fig:decomp-not-necessary}
\end{figure}

The next example shows that decomposability of the bidirected part of
an acyclic mixed graph is not necessary for strict Gaussian causality.

\begin{example}
  \label{ex:decomp-not-necessary}
  Let $\mathcal{G}$ be the mixed graph depicted in
  Figure~\ref{fig:decomp-not-necessary}(a).  The bidirected part of
  this graph is a four-cycle, so not decomposable.  However,
  $\mathcal{G}$ is strictly Gaussian causal because
  $\mathbf{N}(\mathcal{G})=\mathbf{N}(\mathcal{D})$ for the acyclic
  digraph $\mathcal{D}$ from Figure~\ref{fig:decomp-not-necessary}(b).
\end{example}

The two graphs $\mathcal{G}$ and $\mathcal{D}$ in
Figure~\ref{fig:decomp-not-necessary} have the same vertex set, and
the fact that $\mathbf{N}(\mathcal{G})=\mathbf{N}(\mathcal{D})$ is an
instance of Markov equivalence of two mixed graphs that are ancestral
in the sense of \cite{richardson:2002}.  More generally, for ancestral
graphs, the sufficient condition from Theorem~\ref{thm:main1} could be
strengthened by first applying results on the characterization of
Markov equivalence of ancestral graph \cite{ali:2009,zhao:2005} to
convert a given ancestral mixed graph $\mathcal{G}$ to another
ancestral mixed graph $\mathcal{G'}$ with $\mathbf{N}(\mathcal{G})=\mathbf{N}(\mathcal{G'})$
with fewer bidirected edges; this is in the spirit of
\cite{drton:2008}.  If the bidirected part of $\mathcal{G'}$ is
decomposable, then Theorem~\ref{thm:main1} can be applied.


\section{Treks, systems of treks
  and d-connecting walks}
\label{sec:preliminaries}

For a proof of Theorem~\ref{thm:main}, which is the topic of
Section~\ref{sec:chaingraphs}, we need to be able to make arguments
about the structure of an acyclic digraph $\mathcal{D}$ that
determines a hidden variable model equal to a given chain graph model
$\mathbf{N}(\mathcal{G})$.  In preparation, we collect in this section
known results about the combinatorial structure of the covariance
matrices of distributions in $\mathbf{N}(\mathcal{D})$.

Let $\mathcal{D}=(U,E)$ be any acyclic digraph.  A \emph{walk} $\pi$
from \emph{source} node $u$ to \emph{target} node $v$ in $\mathcal{D}$
is a sequence of edges in $E$ connecting the consecutive nodes in a
sequence of nodes starting at $u$ and ending at $v$.  If $\pi$ visits
all of its nodes only once, then it is a \emph{path}.  If all edges
are traversed according to their orientation, then the walk $\pi$ is a
\emph{directed path} from $u$ to $v$; it is a path because visiting a
node twice would result in a directed cycle.  A \emph{collider} on a
walk $\pi$ from $u$ to $v$ is an interior node $w$ (i.e.,
$w\notin\{u,v\}$) such that the two edges of $\pi$ that are incident
to $w$ have their ``arrowheads collide'' as $w'\to w\leftarrow w''$.

A \emph{trek} $\tau$ from $u$ to $v$ is a walk without colliders and
takes the form:
\begin{equation}
  \label{eq:trek1}
  v^{\text{L}}_{l}\leftarrow
  v^{\text{L}}_{l-1}\leftarrow \dots \leftarrow
  v^{\text{L}}_1\leftarrow
  v^{\text{T}}\to  v^{\text{R}}_1\to
  \dots \to v^{\text{R}}_{r-1}\to v^{\text{R}}_{r},
\end{equation}
where the endpoints are $v^{\text{L}}_{l}=u$, $v^{\text{R}}_{r}=v$.
We say that
$\lhs{\tau}=\{v^{\text{T}},v^{\text{L}}_1,\dots,v^{\text{L}}_{l}\}$ is
the left-hand side of $\tau$, and similarly,
$\rhs{\tau}=\{v^{\text{T}},v^{\text{R}}_1,\dots,v^{\text{R}}_{r}\}$ is
the right-hand side.  The \emph{top} node $v^\text{T}=\tp{\tau}$ is
contained in both sides of the trek.  Though a trek in an acyclic
digraph has no repetition of nodes on either the right- or left-hand
side, it may contain the same node once on its left-hand side and once
again on its right-hand side and thus not be a path.  Every directed
walk is a trek with $|\lhs{\tau}|=1$ or $|\rhs{\tau}|=1$ depending on
the orientation of the edges.  A trek is allowed to be \emph{trivial},
that is, for every node $v\in U$, there is a trek $\tau$ from $v$ to
$v$ that contains no edges and has $\lhs{\tau}=\rhs{\tau}=\{v\}$ and
$\tp{\tau} = v$.
 
\begin{figure}[t]
  \centering
  \includegraphics[scale=0.25]{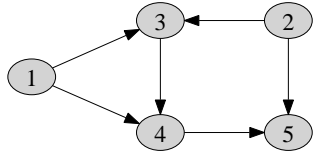}
  \caption{An acyclic digraph.}
  \label{fig:trek_examples}
\end{figure}

\begin{example}
  In the graph shown in Figure~\ref{fig:trek_examples}, the path
  $\pi_1: 3\leftarrow 1\to 4\to 5$ is a trek with
  $\lhs{\pi_1}=\{1,3\}$, $\rhs{\pi_1}=\{1,4,5\}$, and $\tp{\pi_1} =
  1$.  Similarly, the walk $\pi_2: 4\leftarrow 3\leftarrow 1\to 3\to
  4\to 5$ is a trek with $\lhs{\pi_2}=\{1,3,4\}$,
  $\rhs{\pi_2}=\{1,3,4,5\}$, and $\tp{\pi_2} = 1$.  The walk $\pi_3: 3
  \rightarrow 4 \leftarrow 1\to 3$ is not a trek due to the
  collider at node 4.
\end{example}

Let $\Lambda=(\lambda_{uv})\in\mathbb{R}^E$, and let
$\Omega=(\omega_{vv})$ be a diagonal $U\times U$ matrix with positive
diagonal entries.  
If a covariance matrix $\Sigma=(\sigma_{uv})$ satisfies
$\Sigma=(I-\Lambda)^{-T}\Omega(I-\Lambda)^{-1}$ as
in~(\ref{eq:cov-mx}), then the matrix $\Lambda\in\mathbb{R}^E$ and the
diagonal matrix $\Omega$ in this representation are unique.  Indeed,
if $u\to v\in E$, then $\lambda_{uv}$ corresponds to entry $u$ in the vector
\begin{equation}
  \label{eq:regress-coeff}
\Sigma_{v,\pa(v)}\left(\Sigma_{\pa(v),\pa(v)}\right)^{-1} 
\end{equation}
and $\omega_{vv}$ is a Schur complement, namely,
\begin{equation}
  \label{eq:cond-var}
  \omega_{vv}=\sigma_{vv} - 
  \Sigma_{v,\pa(v)}\left(\Sigma_{\pa(v),\pa(v)}\right)^{-1}
  \Sigma_{\pa(v),v}. 
\end{equation}
For a general discussion of this uniqueness see \cite{drton:2011}.

For a trek $\tau$ in $\mathcal{D}$ with
$\tp{\tau}=t$, define the trek monomial
\begin{equation}
  \label{eq:trekmonomial1}
  \sigma(\tau) = \omega_{tt}\prod_{x\to y\in \tau} \lambda_{xy}.
\end{equation}
The unique representation implies that the value of the trek monomial
$\sigma(\tau)$ is determined by $\Sigma$ via~(\ref{eq:regress-coeff})
and~(\ref{eq:cond-var}).  The following rule expresses the entries of
a covariance matrix
as sums of trek
monomials~\cite{spirtes:2000,wright:1921,wright:1934}.

\begin{lemma}[Trek rule]
  \label{lem:trek-rule}
  The covariance matrix $\Sigma=(\sigma_{uv})$ of a distribution from
  $\mathbf{N}(\mathcal{D})$ has the entries
  \begin{equation}
    \label{eq:trek-rule}
    \sigma_{uv} = \sum_{\tau\in\mathcal{T}(u,v)} \sigma(\tau),
  \end{equation}
  where $\mathcal{T}(u,v)$ is the set of all treks from $u$ to $v$,
  which is finite.
\end{lemma}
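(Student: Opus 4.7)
The plan is to expand the matrix formula $\Sigma=(I-\Lambda)^{-T}\Omega(I-\Lambda)^{-1}$ combinatorially, using acyclicity of $\mathcal{D}$ to interpret the Neumann series for $(I-\Lambda)^{-1}$ in terms of directed paths, and then to match the resulting pairs of directed paths with treks through the bijection encoded by the top node.

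First, I would fix any topological ordering of $U$ consistent with $\mathcal{D}$; relative to this ordering, $\Lambda$ is strictly upper (or lower) triangular and hence nilpotent, so
\[
(I-\Lambda)^{-1} = \sum_{k=0}^{N-1} \Lambda^k
\]
for $N=|U|$. Expanding the $k$-th power entrywise,
\[
\bigl(\Lambda^k\bigr)_{tv} = \sum \lambda_{t w_1}\lambda_{w_1 w_2}\cdots \lambda_{w_{k-1} v},
\]
where the sum ranges over sequences $t, w_1,\dots,w_{k-1}, v$. Any nonzero term corresponds to a sequence of edges $t\to w_1\to\cdots\to v$ in $E$, that is, a directed walk from $t$ to $v$; acyclicity forces such a walk to be a directed path, with no repeated nodes. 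Thus
\[
\bigl((I-\Lambda)^{-1}\bigr)_{tv}
= \sum_{p\in \mathcal{P}(t,v)}\;\prod_{x\to y\in p}\lambda_{xy},
\]
where $\mathcal{P}(t,v)$ is the finite set of directed paths from $t$ to $v$ (including the trivial path when $t=v$, whose product is the empty product $1$).

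Next, since $\Omega$ is diagonal,
\[
\sigma_{uv} = \sum_{t\in U}\bigl((I-\Lambda)^{-T}\bigr)_{ut}\,\omega_{tt}\,\bigl((I-\Lambda)^{-1}\bigr)_{tv}
= \sum_{t\in U}\omega_{tt}\,\bigl((I-\Lambda)^{-1}\bigr)_{tu}\,\bigl((I-\Lambda)^{-1}\bigr)_{tv}.
\]
Substituting the path expansion on both factors gives
\[
\sigma_{uv} = \sum_{t\in U}\;\sum_{p_L\in\mathcal{P}(t,u)}\;\sum_{p_R\in\mathcal{P}(t,v)}
\omega_{tt}\prod_{x\to y\in p_L}\lambda_{xy}\prod_{x\to y\in p_R}\lambda_{xy}.
\]
The key observation is a bijection between triples $(t,p_L,p_R)$ with $t\in U$, $p_L\in\mathcal{P}(t,u)$, $p_R\in\mathcal{P}(t,v)$, and treks $\tau\in\mathcal{T}(u,v)$: reading $p_L$ backward from $u$ to $t$ and then $p_R$ forward from $t$ to $v$ produces a walk with no collider and top node $t$, i.e., a trek of the form in~(\ref{eq:trek1}); conversely, any trek splits at its top into its left- and right-hand directed paths. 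Under this bijection, the product above is exactly the trek monomial $\sigma(\tau)$ from~(\ref{eq:trekmonomial1}), yielding~(\ref{eq:trek-rule}).

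Finiteness of $\mathcal{T}(u,v)$ follows from the same path analysis: the left- and right-hand sides of any trek are directed paths in $\mathcal{D}$, and acyclicity bounds the number of such paths (each is a subset of $U$ with a fixed ordering). I do not expect a genuine obstacle; the only care needed is to handle the trivial trek (contributing the summand at $t=u=v$ when $u=v$) and to verify that no spurious terms arise from the triangular expansion, both of which are immediate from acyclicity.
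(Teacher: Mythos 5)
Your proof is correct. The paper does not prove this lemma at all---it is stated as a known result with citations to Wright's path analysis and to Spirtes et al.---and your Neumann-series expansion of $(I-\Lambda)^{-1}$, followed by the identification of triples $(t,p_L,p_R)$ with treks split at their top node, is precisely the standard argument behind those references. The only point worth stating explicitly is that a trek may traverse the same edge on both its left- and right-hand sides (as in the paper's example $\pi_2$), so the trek monomial $\sigma(\tau)=\omega_{tt}\prod_{x\to y\in\tau}\lambda_{xy}$ must be read as a product over edge \emph{traversals} (i.e., $\prod_{e\in p_L}\lambda_e\prod_{e\in p_R}\lambda_e$, with a doubly used edge contributing its coefficient squared); your derivation produces exactly this multiset product, so the identification goes through.
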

 
In the treatment of chain graphs we will have information about
subdeterminants rather than entries of the covariance matrix.  This will
require us to consider sets of treks $\Pi=\{\tau_1,\dots,\tau_n\}$.
Let $x_i$ and $y_i$ be the source and the target of each trek
$\tau_i$, respectively.  If all sources and all targets are distinct,
then we call $\Pi$ a \emph{system of treks} from $X=\{x_1,\dots,x_n\}$
to $Y=\{y_1,\dots,y_n\}$, denoted as $\Pi:X\rightrightarrows Y$.  This
allows $X\cap Y\not=\emptyset$.  If
\[
\lhs{\tau_i}\cap\lhs{\tau_j}=\emptyset=\rhs{\tau_i}\cap\rhs{\tau_j}
\]
for all $i\neq j$, then we say that the system of treks
$\Pi=\{\tau_1,\dots,\tau_n\}$ has \emph{no sided intersection}.  We
then have the following generalization of the trek rule
\cite{sullivant:2010}.

\begin{lemma}[\cite{sullivant:2010}]
  \label{lem:std}
  Suppose $\Sigma=(\sigma_{uv})$ is the covariance matrix of a
  distribution from $\mathbf{N}(\mathcal{D})$.  Then for any two sets
  $X,Y\subseteq V$ with $|X|=|Y|$, it holds that
  \[
  \det\left(\Sigma_{X, Y}\right) = 
  \sum \;(-1)^\Pi
  \prod_{\tau\in\Pi} \sigma(\tau),
  \]
  where the sum is over systems of treks $\Pi: X\rightrightarrows Y$
  without sided intersection.  In particular, there is a system of
  treks from $X$ to $Y$ in $\mathcal{D}$ without sided intersection if
  and only if
  \[
  \det\left(\Sigma_{X, Y}\right) \not= 0
  \]
  for the covariance matrix $\Sigma$ of some distribution
  in $\mathbf{N}(\mathcal{D})$.
\end{lemma}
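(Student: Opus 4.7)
The plan is to establish the determinant identity by Leibniz expansion combined with a Lindström--Gessel--Viennot-style sign-reversing involution, and then to deduce the characterization of nonvanishing as an algebraic consequence.

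I would start with the Leibniz formula and substitute Lemma \ref{lem:trek-rule} for each entry of $\Sigma_{X,Y}$. Writing $X = \{x_1,\dots,x_n\}$ and $Y = \{y_1,\dots,y_n\}$,
\[
\det(\Sigma_{X,Y}) = \sum_{\pi \in S_n} \mathrm{sgn}(\pi) \prod_{i=1}^n \sigma_{x_i, y_{\pi(i)}} = \sum_{\pi \in S_n} \mathrm{sgn}(\pi) \prod_{i=1}^n \sum_{\tau_i \in \mathcal{T}(x_i, y_{\pi(i)})} \sigma(\tau_i).
\]
Distributing the product over the sum and regrouping yields $\sum_\Pi (-1)^\Pi \prod_{\tau \in \Pi} \sigma(\tau)$, where $\Pi = \{\tau_1,\dots,\tau_n\}$ ranges over all systems of treks from $X$ to $Y$ and $(-1)^\Pi$ denotes $\mathrm{sgn}$ of the permutation that matches sources to targets. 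Finiteness of $\mathcal{T}(u,v)$ from Lemma \ref{lem:trek-rule} justifies the reorganisation.

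The core step is a sign-reversing involution $\Phi$ on systems with sided intersection. Fix linear orders on $V$ and on $\{1,\dots,n\}$; given such a $\Pi$, take the lexicographically first triple $(v,i,j)$ with $v \in \lhs{\tau_i} \cap \lhs{\tau_j}$ or $v \in \rhs{\tau_i} \cap \rhs{\tau_j}$. In the right-side case, the portion of each $\tau_k$ from $\tp{\tau_k}$ to its target $y_{\pi(k)}$ is a directed path by the form of a trek, so the sub-walks from $v$ onward are two directed tails. Swap these tails to obtain $\tau_i'$ and $\tau_j'$. These remain treks: no collider arises at $v$ because the swapped-in edges incident to $v$ still point away from $v$, and no node is repeated on a side because $\mathcal{D}$ is acyclic and any resulting directed walk is a directed path. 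The multiset of edges of $\Pi$ and the tops of its constituent treks are preserved, so $\prod_\tau \sigma(\tau)$ is invariant; the transposition of targets flips the permutation sign; and the canonical choice $(v,i,j)$ is preserved by the swap, ensuring $\Phi^2 = \mathrm{id}$. Hence all systems with sided intersection cancel in pairs, leaving exactly the claimed sum.

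The main obstacle is precisely this verification — in particular handling the left-side case symmetrically and confirming well-definedness and involutivity of $\Phi$ under the canonical-choice rule. For the ``in particular'' clause, the forward direction is immediate: if no system without sided intersection exists, the identity gives $\det(\Sigma_{X,Y}) \equiv 0$ as a polynomial in the free parameters $\lambda_{uv}, \omega_{vv}$, hence for every distribution in $\mathbf{N}(\mathcal{D})$. For the converse, one shows that the polynomial $\sum_\Pi (-1)^\Pi \prod_\tau \sigma(\tau)$ ranging over non-intersecting systems is not identically zero, typically by exhibiting a trek monomial that appears in exactly one such system and hence cannot be cancelled; generic choice of parameters then gives a covariance matrix $\Sigma \in \mathbf{N}(\mathcal{D})$ with $\det(\Sigma_{X,Y}) \neq 0$.
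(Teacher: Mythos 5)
The paper does not prove this lemma; it is quoted verbatim from \cite{sullivant:2010}, so there is no in-paper argument to compare against. Your outline is the natural direct route (Leibniz expansion plus a Lindstr\"om--Gessel--Viennot-type sign-reversing involution on trek systems with sided intersection), and most of it is sound: the tail swap at a shared right-side node does preserve tops and the edge multiset, flips the permutation sign, and produces valid treks because a directed walk in an acyclic digraph cannot repeat a vertex. However, the step you yourself flag as the main obstacle is genuinely the crux, and defining the involution directly on trek systems is harder than on path systems: a trek has two sides, the canonical triple $(v,i,j)$ must also record the side, and one must check that a right-tail swap neither creates a lexicographically earlier shared node nor changes which pair realizes $v$ (the shared-node set of the pair $(i,j)$ does change under the swap, e.g.\ nodes in $P_i\cap Q_j$ cease to be shared). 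The cited source avoids this entirely by applying the Cauchy--Binet formula to the factorization $\Sigma_{X,Y}=[(I-\Lambda)^{-T}]_{X,U}\,\Omega\,[(I-\Lambda)^{-1}]_{U,Y}$: diagonality of $\Omega$ forces the two index sets to coincide (with a common set $S$ of tops), and each factor $\det([(I-\Lambda)^{-1}]_{S,Y})$ is handled by the classical LGV lemma for directed paths. You should consider that route; it turns the trek-system statement into two independent path-system statements glued at the tops.

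There is also a concrete gap in your converse direction of the ``in particular'' clause. You propose to exhibit ``a trek monomial that appears in exactly one such system,'' but a trek monomial $\prod_{\tau\in\Pi}\sigma(\tau)=\prod_t\omega_{tt}\prod_e\lambda_e$ records only a multiset of edges (an edge can occur on a left side of one trek and a right side of another, contributing $\lambda_e^2$), and it is not evident that this multiset determines $\Pi$, so cancellation between distinct non-intersecting systems is not ruled out by your argument as stated. The factorization again repairs this cleanly: for fixed $S$ the $\omega$-monomial $\prod_{s\in S}\omega_{ss}$ isolates the term, each vertex-disjoint path system is determined by its edge set (in-- and out--degrees at most one), so neither factor determinant is the zero polynomial, and a product of nonzero polynomials in a polynomial ring is nonzero. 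Either supply an argument of this kind or cite it; as written, the nonvanishing claim is asserted rather than proved.
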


The sign of $\det(\Sigma_{X,Y})$ in Lemma~\ref{lem:std} is only
well-defined after an ordering has been established for the elements
of $X$ and $Y$.  Given such orderings, the sign $(-1)^\Pi$ of a trek
system $\Pi$ is defined in terms of the permutation arising from the
bijection between $X$ and $Y$ that maps the sources of the treks in
$\Pi$ to their targets.  The details are irrelevant for the subsequent
use of Lemma~\ref{lem:std}.

The final concept to be introduced is d-connection; see
e.g.~\cite{lauritzen:1996}.  Let $u\not=v$ be distinct nodes, and let
$A\subseteq V$.  A walk $\pi$ from $u$ to $v$ is \emph{d-connecting
  given $A$} if
\begin{enumerate}
\item every collider in $\pi$ is in $A$, and
\item\label{dconn-walk-2} every non-collider in $\pi$ is not in $A$.
\end{enumerate}
Condition~(\ref{dconn-walk-2}) implies that $u,v\notin A$, as only
interior nodes on $\pi$ can be colliders.  Note that treks from $u$ to
$v$ are precisely the d-connecting walks for evidence set
$A=\emptyset$.  The next lemma makes the connection between
d-connecting walks and non-zero conditional covariances.

\begin{lemma}
  \label{lem:dconnect}
  Let $A\subseteq V$ and $u,v\in V\setminus A$.  Then there is a
  d-connecting walk from $u$ to $v$ given $A$ if and only if there
  exists a distribution in $\mathbf{N}(\mathcal{D})$ whose covariance
  matrix $\Sigma=(\sigma_{uv})$ has
  \[
  \sigma_{uv.A} \;:=\; \sigma_{uv} -
  \Sigma_{u,A}\left(\Sigma_{A,A}\right)^{-1} \Sigma_{A,v} 
  \;\not=\;   0.
  \]
\end{lemma}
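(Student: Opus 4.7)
The plan is to reduce the statement to the classical equivalence between d-separation in acyclic digraphs and conditional independence under joint normality. Observe first that for any $\mathcal{N}(\mu,\Sigma)\in\mathbf{N}(\mathcal{D})$ the quantity $\sigma_{uv.A}$ is precisely the conditional covariance of $X_u$ and $X_v$ given $X_A$. Consequently $\sigma_{uv.A}=0$ is equivalent to $X_u\perp\!\!\!\perp X_v\mid X_A$, and the lemma becomes the assertion that a d-connecting walk from $u$ to $v$ given $A$ exists if and only if $X_u\perp\!\!\!\perp X_v\mid X_A$ fails in some distribution of $\mathbf{N}(\mathcal{D})$.

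For the ``only if'' direction, suppose $u$ and $v$ are d-separated given $A$. Here I would invoke the soundness of d-separation for acyclic digraphs (see \cite{lauritzen:1996,spirtes:2000}): every distribution that factorizes recursively according to $\mathcal{D}$ satisfies $X_u\perp\!\!\!\perp X_v\mid X_A$. Since every element of $\mathbf{N}(\mathcal{D})$ arises from structural equations of the form~(\ref{eq:sem}) and hence factorizes according to $\mathcal{D}$, we get $\sigma_{uv.A}=0$ for every such $\Sigma$.

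For the ``if'' direction, assume a d-connecting walk from $u$ to $v$ given $A$ exists. I would appeal to the faithfulness (completeness) theorem for Gaussian DAG models of \cite{spirtes:2000}: viewed as a rational function of the parameters $(\Lambda,\Omega)\in\mathbb{R}^E\times\mathbb{R}^U_{>0}$, the partial covariance $\sigma_{uv.A}$ vanishes identically only when $u$ and $v$ are d-separated given $A$. Hence a generic parameter choice yields a covariance matrix $\Sigma$ with $\sigma_{uv.A}\neq 0$, providing the required distribution.

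The main obstacle, should one want a self-contained argument bypassing the two classical theorems, is the ``if'' direction. A natural algebraic route is to use the Schur-complement formula defining $\sigma_{uv.A}$ together with Lemmas~\ref{lem:trek-rule} and~\ref{lem:std} to write $\sigma_{uv.A}$ as a ratio of polynomials in the edge coefficients and error variances. The d-connecting walk can be broken at its colliders (all of which lie in $A$) into treks that suggest a distinguished monomial in this expression; the delicate point is showing that this monomial does not cancel against the other contributions for generic parameters, which is handled via a Zariski-genericity argument on the parameter space.
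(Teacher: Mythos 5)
Your proposal is correct and coincides with what the paper itself does: Lemma~\ref{lem:dconnect} is stated there without proof, as a known consequence of the soundness and completeness (faithfulness for generic parameters) of d-separation for Gaussian acyclic digraph models, which is exactly the reduction you make. The only step worth making explicit is that the classical theorems are phrased for d-connecting \emph{paths} (where a collider may be in $A$ or merely have a descendant in $A$), so your reduction additionally needs the standard equivalence between the existence of such a path and the existence of a d-connecting walk in the sense defined in Section~\ref{sec:preliminaries}.
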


For two nodes $u\not=v$, we define the \emph{top} node of a
d-connecting walk $\pi$, denoted $\tp{\pi}$, as the top node of the
(non-trivial) trek starting at the source node $u$ and ending at the
first node in $A\cup\{v\}$ that is visited by $\pi$.  If $\pi$ has no
colliders and is thus itself a trek, this definition of $\tp{\pi}$ is
consistent with the definition for the top node of a trek.  Let
$\mathcal{W}_A(u,v)$ be the set of all walks from $u$ to $v$ that are
d-connecting given $A$.  Then we write
\begin{equation}
\label{eq:topnodes}
\tops_A(u,v) = \bigcup_{\pi\in\mathcal{W}_A{(u,v)}} {\tp{\pi}}
\end{equation}
for the set of all top nodes in walks from $u$ to $v$ that are
d-connecting given $A$.

\begin{example} Consider again the graph from
  Figure~\ref{fig:trek_examples}.
  
  \begin{itemize}
  \item[(a)] A system of treks $\Pi=\{\tau_1,\tau_2\}$ from
    $X=\{4,5\}$ to $Y=\{3,4\}$ is given by:
    \begin{align*}
      \tau_1: 4\leftarrow 1\to 3 \to 4, && \tau_2: 5\leftarrow 2 \to 3.
    \end{align*}
    The system $\Pi$ has a sided intersection because
    $3\in\rhs{\tau_1} \cap \rhs{\tau_2}$.
    
  \item[(b)] The set $\Pi=\{\tau_1,\tau_2\}$ comprising the two treks
    \begin{align*}
      \tau_1: 3\leftarrow 1\to 4 \to 5, && \tau_2: 5\leftarrow 2
    \end{align*}
    is a system of treks from $X=\{3,5\}$ to $Y=\{2,5\}$ that has no
    sided intersection.  The node 5 appears on different sides in
    $\tau_1$ and in $\tau_2$.

  \item[(c)] Let $A=\{3,5\}$.  The walk
    \begin{align*}
      \pi: 2 \rightarrow 3 \leftarrow 1 \to 4
    \end{align*}
    from 2 to 4 is d-connecting given $A$, with $\tp{\pi} =
    2$.  Since all walks from 2 to 4 start with edge $2\to 3$ or
    $2\to 5$, we have $\tops_A(2,4)=\{2\}$.
    
  \item[(d)] Let $A=\{5\}$.  Then the walk
    \begin{align*}
       \pi: 2 \rightarrow 3 \leftarrow 1 \to 4
    \end{align*}
    is not a d-connecting walk given $A$.  However,
    \begin{align*}
      \pi': 2 \to 3\to 4\to 5 \leftarrow 4
    \end{align*}
    is d-connecting given $A$.  For the same reason as in (c),
    $\tops_A(2,4)=\{2\}$.
  \end{itemize}
\end{example}

The trek rule from Lemma~\ref{lem:trek-rule}, the description of
determinants in terms of trek systems from Lemma~\ref{lem:std}, and
the result on d-connecting walks from Lemma~\ref{lem:dconnect} each
have generalizations to mixed graphs.  In these generalizations, the
notion of a collider is extended to also include vertices $w$ for
which the incident edges are of the form $w'\to w\bi w''$, $w'\bi
w\leftarrow w''$, or $w'\bi w\bi w''$; details can be found
in~\cite{richardson:2002,sullivant:2010}.  In the special case of
chain graphs (and thus also for acyclic digraphs), it holds in
addition that the model can be described entirely by conditional
independence constraints \cite{richardson:2002,wermuth:2004}.

\begin{lemma}
  \label{lem:ci}
  If $\mathcal{G}=(V,D,B)$ is a chain graph, then a positive definite
  $V\times V$ matrix $\Sigma=(\sigma_{uv})$ is the covariance matrix
  of a distribution in $\mathbf{N}(\mathcal{G})$ if and only if
  \[
  \sigma_{uv.A} \;:=\; \sigma_{uv} -
  \Sigma_{u,A}\left(\Sigma_{A,A}\right)^{-1} \Sigma_{A,v} =0
  \]
  for all $A\subseteq V$ and $u,v\in V\setminus A$ for which there
  does not exist a d-connecting walk from $u$ to $v$ given $A$.
\end{lemma}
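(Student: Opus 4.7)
This is essentially a restatement of known structural results about AMP chain graphs \cite{richardson:2002,wermuth:2004}; I sketch the logic a detailed proof would follow.

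The forward (``only if'') direction is a consequence of the mixed-graph generalization of Lemma~\ref{lem:dconnect}. Specifically, if $\Sigma = (I-\Lambda)^{-T}\Omega(I-\Lambda)^{-1}$ with $\Lambda\in\mathbb{R}^D$ and $\Omega\in\mathit{PD}(B)$, and if no walk from $u$ to $v$ in $\mathcal{G}$ is d-connecting given $A$, then writing $\sigma_{uv.A}$ as a ratio of subdeterminants and expanding each using the mixed-graph analogue of Lemma~\ref{lem:std} produces only monomials indexed by walks that would have to be d-connecting given $A$. The absence of such walks forces every surviving term to vanish, so $\sigma_{uv.A}=0$.

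The substantive direction is the converse. My plan is to decompose $\mathcal{G}$ along its chain components $T_1,\dots,T_k$ (the connected components of $(V,B)$), ordered topologically so that every edge in $D$ goes from a lower- to a higher-indexed component; this is possible because $\mathcal{G}$ has no semi-directed cycles. Writing $\pa(T_i)$ for the directed parents of nodes in $T_i$, I would extract two families of implied independences from the hypothesis:
\begin{enumerate}
\item for each $i$, $T_i$ is independent of $T_1\cup\dots\cup T_{i-1}$ given $\pa(T_i)$, obtained by taking $A=\pa(T_i)$ and verifying that every walk exiting $T_i$ toward an earlier component must traverse a non-collider in $\pa(T_i)$ or an unopened collider;
\item for each non-$B$-adjacent pair $u,v\in T_i$, the partial covariance $\sigma_{uv.A}$ vanishes for $A=\pa(T_i)\cup(T_i\setminus\{u,v\})$, which is the defining marginal-independence constraint of the bidirected Gaussian model on $T_i$ after conditioning on $\pa(T_i)$.
\end{enumerate}
Family (i) yields a Gaussian factorization in which $X_{T_i}$ is affine in $X_{\pa(T_i)}$ plus a residual $R_i$, with the residuals mutually independent across $i$. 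Family (ii) forces the covariance of each $R_i$ to be supported on the bidirected edges inside $T_i$. Collecting the population regression coefficients then gives a matrix $\Lambda$; the absence of an edge $u\to v$ in $D$ means, again via (i) applied to a suitable $A$, that the corresponding coefficient is zero, so $\Lambda\in\mathbb{R}^D$. Stacking the residual covariances block-diagonally gives $\Omega\in\mathit{PD}(B)$, and a direct computation confirms $\Sigma=(I-\Lambda)^{-T}\Omega(I-\Lambda)^{-1}$.

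The main obstacle is the d-separation bookkeeping in steps (i) and (ii): one must show that for each pair $(u,v)$ and each conditioning set $A$ arising from the chain-component construction, the hypothesis $\sigma_{uv.A}=0$ is indeed applicable, i.e., no d-connecting walk from $u$ to $v$ given $A$ survives. This is where the chain graph hypothesis is used in an essential way, since without it the components would not admit a topological order and the candidate conditioning sets would fail to block all relevant walks. The required combinatorial facts are precisely the pairwise/global Markov equivalences for AMP chain graphs established in \cite{richardson:2002}.
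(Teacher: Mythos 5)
First, a point of comparison: the paper does not actually prove Lemma~\ref{lem:ci}; it is quoted as a known fact with a pointer to \cite{richardson:2002,wermuth:2004}, so there is no internal proof to measure your sketch against. Your forward direction is a reasonable outline of the standard soundness argument (global Markov property via the mixed-graph trek/determinant expansion), and the architecture of your converse --- topologically order the chain components, regress each component on its directed parents, and identify the residual covariances --- is the right one.

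However, your step (ii) contains a genuine error. For a non-$B$-adjacent pair $u,v$ in a chain component $T_i$ you condition on $A=\pa(T_i)\cup(T_i\setminus\{u,v\})$. Since $T_i$ is a connected component of $(V,B)$, there is a bidirected path from $u$ to $v$ whose interior nodes all lie in $T_i\setminus\{u,v\}\subseteq A$; every such interior node is a collider in the extended sense ($w'\bi w\bi w''$) and lies in $A$, so this walk is d-connecting given $A$. Hence $\sigma_{uv.A}=0$ for this choice of $A$ is \emph{not} among the hypotheses of the lemma and cannot be extracted from them. Moreover, even if it were available, that constraint is the concentration-graph (undirected) condition on the residual of $T_i$, which would force the \emph{inverse} of the residual covariance, rather than the residual covariance itself, to be supported on $B_{T_i}$ --- the wrong model. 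The bidirected semantics of this paper requires the marginal constraint $\sigma_{uv.A}=0$ with $A$ the ancestors of $T_i$ outside $T_i$ (equivalently, $\pa(T_i)$ once your family (i) is in place), exactly as recorded in Lemma~\ref{lem:zero-schur-complement}. I suspect the root of the slip is your appeal to ``AMP chain graphs'': the paper explicitly warns that its chain graphs are not interpreted as in \cite{lauritzen:1996}, and the relevant Markov property is the multivariate-regression/ancestral-graph one of \cite{richardson:2002,wermuth:2004}, under which within-component non-adjacency encodes covariance-graph zeros, not concentration-graph zeros. Two repairs are needed: replace the conditioning set in (ii) by $\an(T_i)\setminus T_i$, and make explicit the finer constraint (beyond your component-level (i)) that kills the regression coefficient of $X_v$ on $X_u$ when $u\in\pa(T_i)\setminus\pa(v)$, since (i) only forces the coefficients on $\mathrm{pre}(T_i)\setminus\pa(T_i)$ to vanish.
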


Finally, the uniqueness results from~(\ref{eq:regress-coeff})
and~(\ref{eq:cond-var}) continue to hold for chain graphs but may fail
for more general mixed graphs; see, for instance, the discussion in
the introduction of \cite{drtoneichler:2009}.

\section{Chain Graphs}
\label{sec:chaingraphs}

In this section we prove the necessity of the condition from
Theorem~\ref{thm:main}.  So suppose, throughout this section, that
$\mathcal{G}=(V,D,B)$ is a chain graph.  Our starting point is
information about the structure of the covariance matrices in
$\mathbf{N}(\mathcal{G})$.

Let $C\subseteq V$ be a chain component, that is, a connected
component of the bidirected part $(V,B)$.  Let
\[
A=\an(C)=\bigcup_{c\in C} \an(c)
\]
be the ancestors of the nodes in $C$.  The set of ancestors of a node $v$, $\an(v)$, is the set of all nodes $u$ such that a directed path from $u$ to $v$ exists. Note that this path is allowed to be trivial; i.e., $v\in\an(v)$. We write $wA\equiv \{w\}\cup A$ when $w\in V$ and define $B_C:=B\cap(C\times C)$ to be the set of
edges between nodes in $C$.  Then the following fact is well-known
from the characterization of $\mathbf{N}(\mathcal{G})$ in terms of
conditional independence that we stated as Lemma~\ref{lem:ci}.

\begin{lemma}
  \label{lem:zero-schur-complement}
  Let $u,v\in C$ be two nodes in the chain component $C\subseteq V$ of
  the chain graph $\mathcal{G}=(V,D,B)$.  Then the nodes $u$ and $v$
  are non-adjacent if and only if
  \[
  \sigma_{uv.A} \;:=\; \sigma_{uv} -
  \Sigma_{u,A}\left(\Sigma_{A,A}\right)^{-1} \Sigma_{A,v} 
  \;=\;
  \frac{\det\left(\Sigma_{uA,vA}\right)}{\det\left(\Sigma_{A,A}\right)} 
  \;=\;   0
  \]
  for all covariance matrices $\Sigma=(\sigma_{ij})$ of distributions
  in $\mathbf{N}(\mathcal{G})$.  Moreover, every matrix in
  $\mathit{PD}(B_C)\subset\mathbb{R}^{C\times C}$ is the
  conditional covariance matrix
  \[
  \Sigma_{C.A}=(\sigma_{uv.A})_{u,v\in C}
  \]
  of a distribution in $\mathbf{N}(\mathcal{G})$.
\end{lemma}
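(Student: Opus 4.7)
The plan is to reduce both assertions to the single identity $\Sigma_{C.A}=\Omega_{C,C}$, understood as saying that the conditional covariance of $X_C$ given the strict ancestors of $C$ equals the noise-covariance block on $C$ (i.e., the effective conditioning set is $\an(C)\setminus C$). Once this identity is established, the non-adjacency equivalence reduces to a statement about $\omega_{uv}$: if $u\bi v\notin B$ then $\omega_{uv}=0$ in every admissible $\Omega$ and hence $\sigma_{uv.A}=0$ uniformly over $\mathbf{N}(\mathcal{G})$, while if $u\bi v\in B$ one may pick $\Omega$ with $\omega_{uv}\neq 0$ to produce $\sigma_{uv.A}\neq 0$. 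The ``moreover'' claim becomes the observation that $\Omega_{C,C}$ sweeps all of $\mathit{PD}(B_C)$ as $\Omega$ varies through $\mathit{PD}(B)$. The second displayed equality in the lemma is the standard Schur-complement identity $\det(\Sigma_{uA,vA})=\det(\Sigma_{A,A})\cdot\sigma_{uv.A}$.

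To secure the key identity I would first extract three consequences of the chain-graph property for the component $C$: (i) no directed edge lies inside $C$, because any $c\to c'$ with $c,c'\in C$ would combine with a bidirected path from $c'$ back to $c$ to form a semi-directed cycle; (ii) every bidirected edge incident to $C$ keeps both endpoints inside $C$, directly from the definition of a chain component as a connected component of $(V,B)$; and (iii) no directed edge leaves $C$ for $\an(C)\setminus C$, again by the semi-directed-cycle argument concatenated with a directed path from the putative ancestor back into $C$. Together, (i) and (iii) force $\pa(c)\subseteq\an(C)\setminus C$ for every $c\in C$ and show that $\an(C)\setminus C$ is an ancestral set in $\mathcal{G}$.

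Armed with these facts, the structural equations~(\ref{eq:sem}) specialize to $X_C=\Lambda_{AC}^{T}X_{A\setminus C}+\epsilon_C$ for an appropriate coefficient block $\Lambda_{AC}$ extracted from $\Lambda$. Ancestrality of $A\setminus C$ makes $X_{A\setminus C}$ a function of $\epsilon_{A\setminus C}$ alone, while (ii) forces $\omega_{ca}=0$ for every $c\in C$ and $a\notin C$, so $\epsilon_C\perp\epsilon_{A\setminus C}$ and hence $\epsilon_C\perp X_{A\setminus C}$ in the joint Gaussian. This gives $X_C\mid X_{A\setminus C}\sim\mathcal{N}(\Lambda_{AC}^{T}X_{A\setminus C},\Omega_{C,C})$, yielding $\Sigma_{C.A}=\Omega_{C,C}$ as desired. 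For the surjectivity in the moreover claim, any $\Omega^{*}\in\mathit{PD}(B_C)$ extends to an admissible $\Omega\in\mathit{PD}(B)$ by placing $\Omega^{*}$ in the $C$-block, the identity in the complementary $(V\setminus C)$-block, and zeros in between; the zeros are consistent with~(ii), which prevents any bidirected edge from crossing the boundary of $C$. The main subtlety is the graph-theoretic bookkeeping around (i)-(iii) and ancestrality, all traceable to the no-semi-directed-cycle property; beyond that, everything is a standard matrix identity or a routine Gaussian calculation.
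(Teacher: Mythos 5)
Your argument is correct, and it is more explicit than what the paper offers: the paper states this lemma without proof, attributing it to the ``well-known'' conditional-independence characterization of chain graph models recorded as Lemma~\ref{lem:ci} (non-adjacency of $u,v\in C$ is equivalent to the absence of a d-connecting walk given $A$, which by the mixed-graph analogues of Lemmas~\ref{lem:dconnect} and~\ref{lem:ci} is equivalent to $\sigma_{uv.A}=0$ throughout the model). You instead work directly from the parametrization $\Sigma=(I-\Lambda)^{-T}\Omega(I-\Lambda)^{-1}$ and establish the single identity $\Sigma_{C.A}=\Omega_{C,C}$, from which both the equivalence and the ``moreover'' surjectivity statement fall out at once; the d-separation route gives the first claim cleanly but still needs essentially your computation for the second, so your approach is the more economical and self-contained of the two. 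Your graph-theoretic steps (i)--(iii) are sound consequences of the no-semi-directed-cycle property, and they correctly deliver that $\pa(c)$ lies in the strict ancestors of $C$ for each $c\in C$, that this set of strict ancestors is ancestral, and that $\Omega_{C,A}=0$, which is all the Gaussian conditioning argument needs. One point worth making explicit: the paper's definition makes $\an(C)\supseteq C$ (trivial paths are allowed), under which $\sigma_{uv.A}$ would degenerate to zero identically; your reading of the conditioning set as $\an(C)\setminus C$ is the intended one (it is the only reading consistent with the later assertion $A\subseteq V\setminus[p]$ and with Example~\ref{ex:EA12}, where $A=\{5\}$ for $C=\{1,\dots,4\}$), so you have resolved that ambiguity the right way rather than introduced an error.
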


In the sequel, suppose that the bidirected part of the chain graph
$\mathcal{G}=(V,D,B)$ is not decomposable, that is, there is a chain
component $C\subseteq V$ whose induced bidirected subgraph contains a
chordless cycle of length $p\ge 4$.  For notational convenience, we
label the nodes on this cycle by $[p]:=\{1,\dots,p\}$ in such a way
that adjacent nodes have sequential labels modulo $p$.  In other
words, $u,v\in [p]$ are adjacent if and only if $|u-v|\in\{1,p-1\}$.
Note that $A\subseteq V\setminus[p]$.

Let $B_p=B\cap([p]\times [p])$ be the set of edges in the considered
bidirected $p$-cycle.  And for an acyclic digraph $\mathcal{D}=(U,E)$
with $U\supseteq V$, let $\mathit{PD}(\mathcal{D})$ be the set of
covariance matrices of distributions in $\mathbf{N}(\mathcal{D})$, and
let
\begin{equation}
  \label{eq:all-schur-D}
  \mathit{PD}_{[p]}(\mathcal{D}|A) = \left\{
    \Sigma_{[p].A} \::\: \Sigma\in\mathit{PD}(\mathcal{D})
  \right\}
\end{equation}
be the associated set of $[p]\times [p]$ Schur complements/conditional
covariance matrices given $A$.  Here,
$\Sigma_{[p].A}=(\sigma_{uv.A})_{u,v\in[p]}$ as in
Lemma~\ref{lem:zero-schur-complement}.

Using Lemma~\ref{lem:zero-schur-complement}, with the adopted labeling
convention, we see that Theorem~\ref{thm:main} is implied by the
following fact.

\begin{proposition}
  \label{prop:chain-graph}
  There does not exist an acyclic digraph $\mathcal{D} = (U,E)$ on
  $U\supseteq V$ such that
  $\mathit{PD}_{[p]}(\mathcal{D}|A)=\mathit{PD}(B_p)$.
\end{proposition}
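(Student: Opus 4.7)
I would argue by contradiction: suppose there is an acyclic digraph $\mathcal{D}=(U,E)$ on $U\supseteq V$ with $\mathit{PD}_{[p]}(\mathcal{D}|A)=\mathit{PD}(B_p)$. The first step is to translate this into a statement about walks in $\mathcal{D}$ via Lemma~\ref{lem:dconnect}. For every non-adjacent pair $i,j\in[p]$, the assumption forces $\sigma_{ij.A}=0$ for all $\Sigma\in\mathit{PD}(\mathcal{D})$, so no walk in $\mathcal{D}$ from $i$ to $j$ is d-connecting given $A$; conversely, for every adjacent pair $(i,i+1)$ a d-connecting walk exists and hence $\tops_A(i,i+1)\neq\emptyset$.

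Next I analyze both sides of the equality. On the $\mathit{PD}(B_p)$ side, I parametrize by $\sigma_{ii.A}=1$ and $\sigma_{i,i+1.A}=\rho_i$, and expand $\det(\Sigma_{[p].A})$ as a polynomial in $\rho_1,\ldots,\rho_p$. A direct computation (the only permutations yielding non-vanishing contributions are the identity, the $p$ cycle-edge transpositions, combinations of disjoint such transpositions, and the two cyclic permutations) shows that its unique monomial involving all of $\rho_1,\ldots,\rho_p$ is $\pm 2\,\rho_1\rho_2\cdots\rho_p$, with coefficient exactly $\pm 2$ coming from the two cyclic orientations of the $p$-cycle. On the $\mathcal{D}$ side, I invoke Lemma~\ref{lem:std} on both the numerator $\det(\Sigma_{[p]A,[p]A})$ and the denominator $\det(\Sigma_{A,A})$ of $\det(\Sigma_{[p].A})=\det(\Sigma_{[p]A,[p]A})/\det(\Sigma_{A,A})$, and rewrite the quotient as a signed sum over systems of d-connecting walks from $[p]$ to $[p]$ given $A$, each walk decomposed at its top node as in~\eqref{eq:topnodes}.

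The crux is the sign-change trick of~\cite{drtonyu:2010}: since $\mathit{PD}(B_p)$ is invariant under conjugation by any diagonal sign matrix $D=\diag(\epsilon_1,\ldots,\epsilon_p)$, all $2^p$ sign patterns on $\rho_1,\ldots,\rho_p$ must be realized inside $\mathit{PD}_{[p]}(\mathcal{D}|A)$. Combined with the rigid coefficient $\pm 2$ on the cycle monomial---which admits only two contributing configurations in the walk-system expansion---this sign rigidity forces two non-consecutive arcs $(i,i+1)$ and $(j,j+1)$ of the chordless cycle to share a top node $t\in\tops_A(i,i+1)\cap\tops_A(j,j+1)\subseteq U$. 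Concatenating the $t$-rooted trek-halves that reach into $\{i,i+1\}$ and into $\{j,j+1\}$ then yields a walk from some $u\in\{i,i+1\}$ to a non-adjacent $v\in\{j,j+1\}$ that is d-connecting given $A$, contradicting the first step.

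The main obstacle is the top-node bookkeeping underlying this sign-change argument, particularly when d-connecting walks pass through colliders in $A$ rather than reducing to simple common-ancestor treks; in such cases $\tp{\pi}$ is only an ancestor of the source $i$ and of the first $A$-node (or of $i+1$) along $\pi$, so extra care is needed to certify that a ``shared top node'' across two arcs really does produce a d-connecting walk between non-adjacent cycle vertices rather than being spoiled by an interior non-collider in $A$. Handling every such configuration uniformly is where chordlessness and $p\ge 4$ become essential: for $p=3$ there is no non-adjacent pair to contradict, and chordlessness rules out alternative routes inside the bidirected subgraph that could otherwise absorb the forced coincidences among top nodes.
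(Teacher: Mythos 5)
There is a genuine gap. You have correctly identified the external input (the sign-change phenomenon of \cite{drtonyu:2010}) and the relevant combinatorial objects (top nodes of d-connecting walks, trek-system expansions), but the logical structure is inverted and the central step is asserted rather than proved. First, the polarity of the sign-change trick is wrong: conjugating $\Sigma_{[p].A}$ by $\diag(\epsilon_1,\dots,\epsilon_p)$ multiplies $\rho_i=\sigma_{i,i+1.A}$ by $\epsilon_i\epsilon_{i+1}$, and the product of these factors around the cycle is $+1$, so conjugation realizes only the $2^{p-1}$ \emph{even} sign patterns. The content of \cite[Example 5.2]{drtonyu:2010} is precisely that an \emph{odd} sign change --- negating a single entry such as $(1,2)$ --- can destroy positive definiteness, i.e.\ $\mathit{PD}(B_p)$ is \emph{not} closed under that operation. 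The contradiction in the paper comes from proving the opposite closure for the other side: $\mathit{PD}_{[p]}(\mathcal{D}|A)$ \emph{is} closed under negating the $(1,2)$ entry (Proposition~\ref{prop:chain-graph-negation}), whence equality of the two sets is impossible. Your statement that ``all $2^p$ sign patterns must be realized'' is neither what conjugation invariance gives nor the property that drives the argument.

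Second, the crux of your argument --- that the coefficient $\pm 2$ of $\rho_1\cdots\rho_p$ in $\det(\Sigma_{[p].A})$ ``forces'' two non-consecutive arcs of the cycle to share a top node --- is not supported by any mechanism. That coefficient is a fact about the polynomial $\det$ in the free coordinates $\rho_1,\dots,\rho_p$ of $\mathit{PD}(B_p)$; on the $\mathcal{D}$ side the quantities $\sigma_{i,i+1.A}$ are not free coordinates, Lemma~\ref{lem:std} applies to $\det(\Sigma_{[p]A,[p]A})$ and $\det(\Sigma_{A,A})$ separately but not to their quotient, and there is no clean ``signed sum over systems of d-connecting walks'' for the Schur complement that would let you compare coefficients. (Had you actually produced a common top node $t\in\tops_A(i,i+1)\cap\tops_A(j,j+1)$, the trek $i\leftarrow\cdots\leftarrow t\to\cdots\to j$ through $\mathcal{D}_{U\setminus A}$ would indeed contradict non-adjacency --- but nothing in your expansion forces that configuration.) The paper's actual work lies elsewhere: given $\Sigma=(I-\Gamma)^{-T}\Delta(I-\Gamma)^{-1}$, it negates the coefficients of $\Gamma$ on the carefully chosen edge set $E_A(1,2)$ of~(\ref{eq:E12-chain-graph}) and proves, via Lemmas~\ref{lem:tops-ancestral-notA}--\ref{thm:Tto1} and Corollary~\ref{cor:AtoA}, that every trek system contributing to $\det(\Sigma_{1A,2A})$ uses exactly one such edge while those contributing to $\det(\Sigma_{A,A})$ and to $\det(\Sigma_{uA,vA})$ for $(u,v)\neq(1,2),(2,1)$ use none; this is the top-node bookkeeping you flag as the ``main obstacle'' but do not carry out, and it is where the proof actually lives.
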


Our approach is based on a sign-change trick from \cite{drtonyu:2010}.
For a matrix $\Phi=(\phi_{uv})\in\mathbb{R}^{p\times p}$, let
$\Phi^{(12)} = (\phi^{(12)}_{uv})$ be the $p \times p$ matrix that
coincides with $\Phi$ except for the $(1,2)$ and $(2,1)$ entries,
which are negated.  Hence,
\begin{align}
  \label{eq:sign-negation}
  \phi^{(12)}_{uv}  &= 
  \begin{cases}
    -\phi_{uv} &\text{if}\quad (u,v)\in\{(1,2), (2,1)\},\\
    \phi_{uv} &\text{if}\quad (u,v)\notin\{(1,2), (2,1)\}.
  \end{cases}
\end{align}
By \cite[Example 5.2]{drtonyu:2010}, there are matrices
$\Phi\in\mathit{PD}(B_p)$ for which $\Phi^{(12)}$ is not positive
definite, so that $\Phi^{(12)}\notin\mathit{PD}(B_p)$.  It follows
that Proposition~\ref{prop:chain-graph} is an implication of the next
fact.

\begin{proposition}
  \label{prop:chain-graph-negation}
  Let $\mathcal{D}=(U,E)$ be an acyclic digraph on $U\supseteq V$ such
  that $\mathbf{N}_V(\mathcal{D})$ is a full-dimensional subset of
  $\mathbf{N}(\mathcal{G})$.  Then
  $\Phi\in\mathit{PD}_{[p]}(\mathcal{D}|A)$ implies that
  $\Phi^{(12)}\in\mathit{PD}_{[p]}(\mathcal{D}|A)$.
\end{proposition}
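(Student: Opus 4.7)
The plan is to follow the sign-change strategy of \cite{drtonyu:2010}. Given $\Sigma = (I - \Lambda)^{-T}\Omega(I - \Lambda)^{-1} \in \mathit{PD}(\mathcal{D})$ with $\Sigma_{[p].A} = \Phi$, I would produce $\tilde{\Sigma} \in \mathit{PD}(\mathcal{D})$ with $\tilde{\Sigma}_{[p].A} = \Phi^{(12)}$ by negating $\lambda_{xy}$ for each edge $x\to y$ in a carefully chosen set $F \subseteq E$. Leaving $\Omega$ unchanged ensures $\tilde\Sigma \in \mathit{PD}(\mathcal D)$ automatically.

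The first step extracts combinatorial constraints on $\mathcal D$. The full-dimensional inclusion $\mathbf{N}_V(\mathcal{D}) \subseteq \mathbf{N}(\mathcal{G})$ combined with Lemma~\ref{lem:zero-schur-complement} forces $\sigma_{ij.A} \equiv 0$ identically in the parameters of $\mathcal D$ whenever $i, j \in [p]$ are non-adjacent on the cycle. By Lemma~\ref{lem:dconnect}, this means $\mathcal D$ admits no d-connecting walk from $i$ to $j$ given $A$ for any such pair. From this I would derive a \emph{top-node disjointness}: for distinct adjacent cycle pairs $(i, j) \neq (i', j')$ in $[p]^2$, $\tops_A(i, j) \cap \tops_A(i', j') = \emptyset$. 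Indeed, if $t$ belonged to both, it would have directed paths (with no interior nodes in $A$) to $i$ and to some $w \in A \cup \{j\}$ coming from the initial trek of a d-connecting walk from $i$ to $j$, and analogously to $i'$ and some $w' \in A \cup \{j'\}$. Splicing these segments through $t$ as top, and, when $w$ or $w'$ lies in $A$, concatenating with the remaining portion of one of the original walks past its collider, yields a d-connecting walk between two cycle nodes among $\{i, j, i', j'\}$. The chordlessness of the $p$-cycle with $p\ge 4$ guarantees that at least one such pair is non-adjacent on the cycle, contradicting the preceding sentence.

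Next, choose any $t \in \tops_A(1,2)$ and let $F$ be a subset of the outgoing edges of $t$, picked so that the parity of $|F \cap \mathrm{edges}(\tau)|$ is odd exactly for treks $\tau$ with $\tp{\tau}=t$. Negating $\lambda_{xy}$ for each $xy\in F$ then flips precisely the trek monomials with top $t$. Applying Lemma~\ref{lem:std} to $\det(\Sigma_{iA,jA})$ for every $(i,j)\in[p]^2$ and to $\det(\Sigma_{A,A})$, and using the top-node disjointness, every trek system contributing to any of these determinants other than $\det(\Sigma_{1A, 2A})$ and $\det(\Sigma_{2A, 1A})$ contains no trek with top $t$, and its value is unchanged. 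The $(1,2)$- and $(2,1)$-numerators pick up the sign flip, so division by the unchanged $\det(\Sigma_{A, A})$ yields $\tilde\Sigma_{[p].A} = \Phi^{(12)}$.

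\textbf{The main obstacle} is the precise definition of $F$ and the associated parity bookkeeping. A trek of a contributing system can pass through $t$ at an interior (non-top) position, introducing outgoing edges of $t$ that are not controlled by the top-node disjointness; and $A$-to-$A$ auxiliary treks in systems for pairs other than $(1, 2)$ must remain unaffected by negation. I expect these issues to be resolvable by taking $F$ to be the outgoing edges along a specific directed path from $t$ toward one distinguished endpoint of $\{1, 2\}$, rather than all outgoing edges of $t$, and by then exploiting the forbidden d-connecting walks established in the second paragraph to rule out contributing interior passes through $t$.
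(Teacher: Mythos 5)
Your overall strategy---negate selected entries of $\Lambda$, keep $\Omega$ fixed, and use Lemma~\ref{lem:std} to show that only the $(1,2)$ and $(2,1)$ numerators change sign while $\det(\Sigma_{A,A})$ and all other numerators are preserved---is exactly the paper's strategy, and your combinatorial observations (no d-connecting walk given $A$ between non-adjacent cycle nodes, hence constraints on top nodes) are sound. However, the core of the construction, namely the choice of the edge set $F$, has a genuine gap that your ``main obstacle'' paragraph does not resolve. You fix a \emph{single} node $t\in\tops_A(1,2)$ and try to flip precisely the trek monomials with top $t$. But $\det(\Sigma_{1A,2A})$ is a sum over many trek systems, and in different contributing systems the trek with source $1$ is topped at \emph{different} elements of $\tops_A(1,2)$ (in the paper's Figure~\ref{fig:negation}(b), for instance, $\tops_A(1,2)=\{10,11\}$ and both occur). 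Negating only the $t$-topped contributions yields $\det(\Sigma_{1A,2A})$ minus twice the $t$-topped part, not $-\det(\Sigma_{1A,2A})$. Moreover, the parity condition you ask for---$|F\cap\mathrm{edges}(\tau)|$ odd exactly when $\tp{\tau}=t$---is generally unachievable with $F$ a set of out-edges of $t$ (or the edges of one directed path from $t$): a trek topped at $t$ uses one out-edge of $t$ on each nontrivial side, and the sides leading to $1$ and to $\{2\}\cup A$ need not exit $t$ through distinguishable edges, while treks topped elsewhere can pass through $t$ as an interior node.

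The missing idea is that the negated set must be a \emph{cut} attached to the whole ancestral set $\tops_A(1,2)$ rather than to one of its elements. The paper takes $E_A(1,2)=\{u\to v\in E : u\in\tops_A(1,2),\ v\in\an_{U\setminus A}(1)\setminus\tops_A(1,2)\}$. Because $\tops_A(1,2)$ is ancestral in $\mathcal{D}_{U\setminus A}$ (Lemma~\ref{lem:tops-ancestral-notA}), every proper directed path from a node of $\tops_A(1,2)$ down to $1$ crosses this cut exactly once (Lemma~\ref{thm:Tto1}), and---this is where the chordlessness of the $p$-cycle enters, via the non-adjacency of $2$ and $p$, etc.---proper directed paths with target in $A$ or in $\{2,\dots,p\}$ never use a cut edge (Lemmas~\ref{thm:targetA} and~\ref{thm:targetj}). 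Together with Lemma~\ref{thm:nonemptylhs}, which guarantees the $1$-sourced trek of every contributing system really does descend to $1$ from a node of $\tops_A(1,2)$, this gives exactly one sign flip per system in $\det(\Sigma_{1A,2A})$ and zero flips everywhere else. Your pairwise top-node disjointness, even if correct, is not the structural fact that makes this bookkeeping work; without the cut construction and the ``exactly one crossing'' argument, the proof does not go through.
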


\begin{proof}[Proof of Proposition~\ref{prop:chain-graph-negation}]
  Let $u,v$ be two distinct nodes in $[p]$.  By
  Lemma~\ref{lem:zero-schur-complement} and Lemma~\ref{lem:dconnect}, since
  $\mathbf{N}_V(\mathcal{D})$ is a full-dimensional subset of
  $\mathbf{N}(\mathcal{G})$, the graph $\mathcal{D}$ contains a d-connecting walk
  from $u$ to $v$ given the ancestors in $A$ if and only
  if $(u,v)\in B_p$.  Similarly, by Lemma~\ref{lem:zero-schur-complement} and
  Lemma~\ref{lem:std}, the graph $\mathcal{D}$ contains a system of
  treks $\Pi:\{u\}\cup A\rightrightarrows \{v\}\cup A$ without sided
  intersection if and only if $(u,v)\in B_p$.
  
  Now write $\Sigma$, the covariance matrix of a distribution in
  $\mathbf{N}(\mathcal{D})$, as
  \begin{equation}
    \label{eq:represent-sigma}
  \Sigma= (I-\Gamma)^{-T}\Delta(I-\Gamma)^{-1}
  \end{equation}
  with $\Gamma=(\gamma_{uv})\in\mathbb{R}^E$ and a diagonal matrix
  $\Delta\in\mathbb{R}^{U\times U}$ that has positive diagonal
  entries.  Suppose
  \[
  \Phi = \Sigma_{[p].A}
  \]
  is the associated conditional covariance matrix.  We will show how
  to define a matrix $\Gamma'=(\gamma_{uv}')\in\mathbb{R}^E$ such that 
  \begin{equation}
    \label{eq:represent-sigma-negated}
  \Sigma'=(\sigma'_{uv})=
  (I-\Gamma')^{-T}\Delta(I-\Gamma')^{-1}
  \end{equation}
  has conditional covariance matrix 
  \begin{equation}
    \label{eq:chain-graph-negation-to-show}
   {\Sigma'}_{[p].A} =\Phi^{(12)}.
  \end{equation}
  
  Consider the set $\tops_A(1,2)$ of all top nodes of walks from $1$
  to $2$ in $\mathcal{D}$ that are d-connecting given $A$;
  recall~(\ref{eq:topnodes}).  Note that $\tops_A(1,2)\cap
  A=\emptyset$.  For a node $u\notin A$, let $\an_{U\setminus A}(u)$
  be the set of ancestors $v\in\an(u)$ for which there is a directed
  path from $v$ to $u$ that is d-connecting given $A$.  In other
  words, this directed path does not contain any nodes in $A$.  We
  allow trivial d-connecting paths from a node outside $A$ to itself
  so that $u\in\an_{U\setminus A}(u)$.  Now, define
  \begin{equation}
    \label{eq:E12-chain-graph}
    E_A(1,2) = \{u \to v\in E \::\: u\in\tops_A(1,2),\, v\in
    \an_{U\setminus A}(1)\setminus\tops_A(1,2)\}
  \end{equation}
  to be the set of all edges that originate at a top node of a
  d-connecting walk and point to a node that is an ancestor of $1$
  along a directed path outside of $A$ but not a top node itself.  (We
  illustrate this definition in Example~\ref{ex:EA12} below.)
  
  Let $\Gamma=(\gamma_{uv})\in\mathbb{R}^E$ be the matrix
  from~(\ref{eq:represent-sigma}).  Define the matrix
  $\Gamma'=(\gamma_{uv}')\in\mathbb{R}^E$ by setting
  \begin{align}
    \label{eq:edge-negation}
    \gamma'_{uv}  &= \begin{cases}
      -\gamma_{uv} &\text{if}\quad u\to v \in E_A(1,2),\\
      \gamma_{uv} &\text{if}\quad u\to v \notin E_A(1,2).
    \end{cases}
  \end{align}
  We claim that this choice of $\Gamma'$ satisfies the desired
  equality from~(\ref{eq:chain-graph-negation-to-show}).  
  
  As stated in Lemma~\ref{lem:zero-schur-complement}, conditional
  covariances are a ratio of two determinants.  In the rest of this
  section, we derive a series of lemmas that lead to
  Corollary~\ref{cor:AtoA}, according to which
  $\det(\Sigma'_{1A,2A})=-\det(\Sigma_{1A,2A})$ while all other
  concerned determinants remain unchanged when replacing $\Sigma$ by
  $\Sigma'$.  It follows that (\ref{eq:chain-graph-negation-to-show})
  indeed holds for our choice of $\Gamma'$.
\end{proof}

\begin{figure}[t]
  \centering
  (a) \hspace{-0cm}
  \includegraphics[scale=0.25]{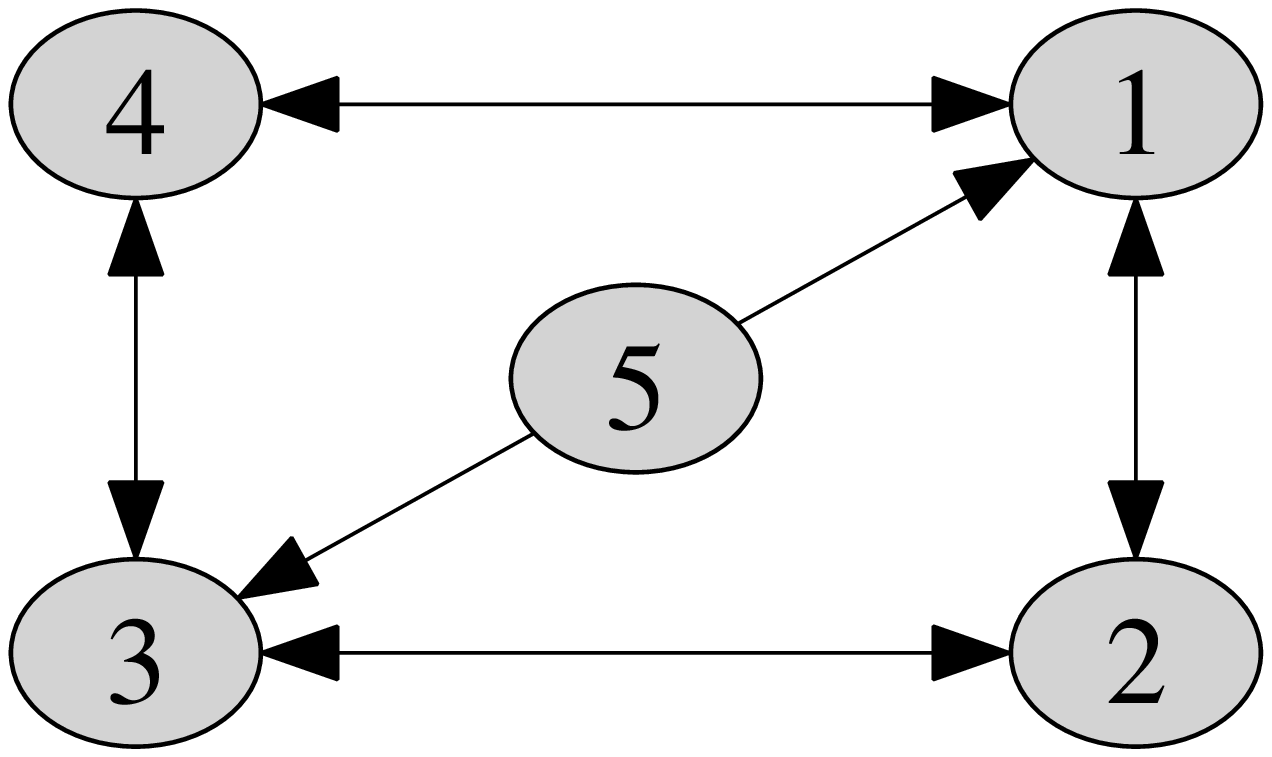} 
  \hspace{0.5cm}
  (b) \hspace{-0cm}
  \includegraphics[scale=0.25]{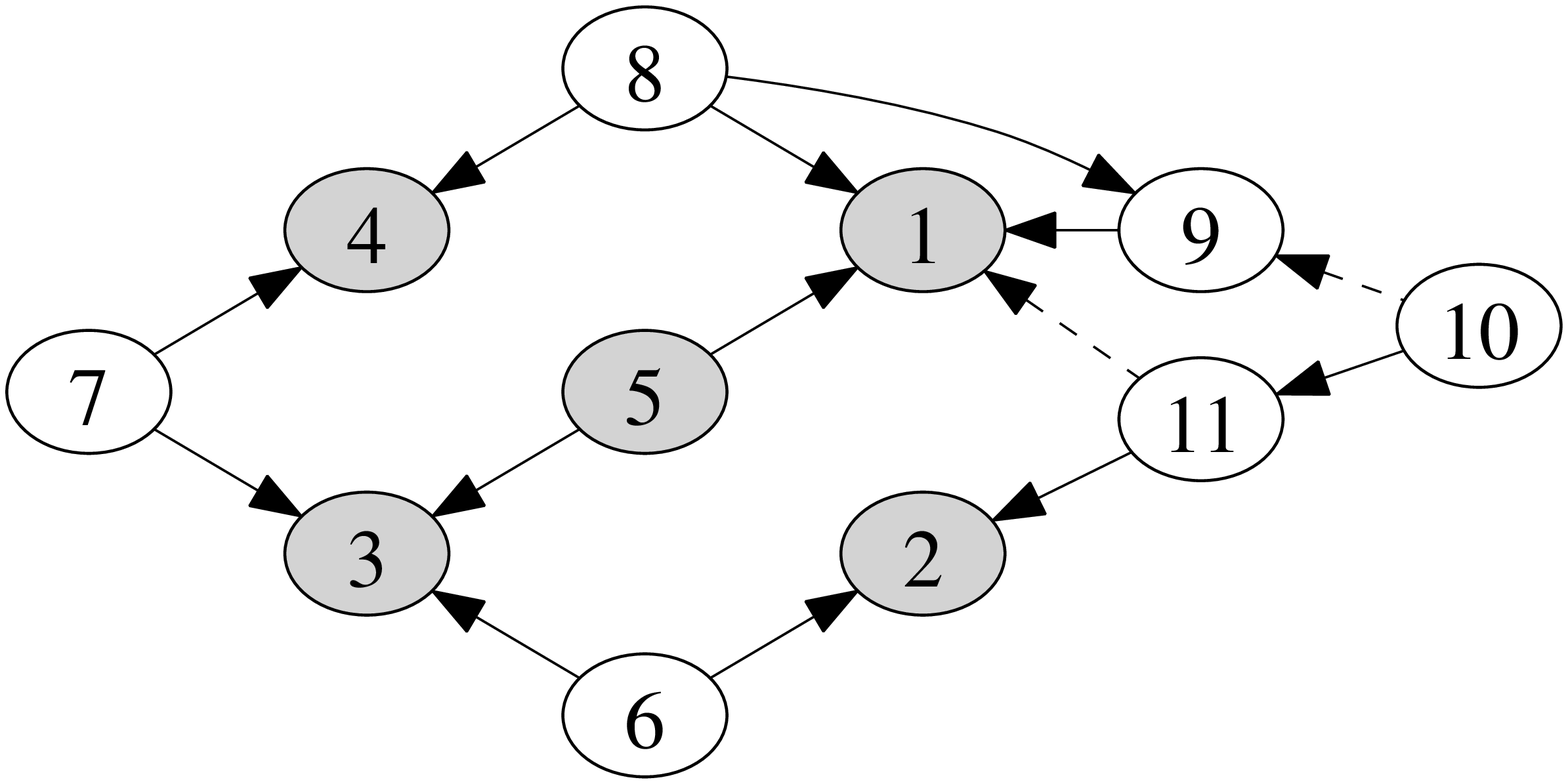}
  \caption{(a) A chain graph $\mathcal{G}$ 
    with vertex set $V=\{1,\ldots,5\}$ that has non-decomposable
    bidirected part.  (b) An acyclic digraph $\mathcal{D}$ such that
    $\mathbf{N}_V(\mathcal{D})$ is a full-dimensional subset of
    $\mathbf{N}(\mathcal{G})$; dashed arrows indicate edges contained
    in $E_A(1,2)$.}
  \label{fig:negation}
\end{figure}

\begin{example}
  \label{ex:EA12}
  Let $\mathcal{G}=(V,D,B)$ be the chain graph 
  depicted in Figure \ref{fig:negation}(a).  The acyclic digraph
  $\mathcal{D} = (U,E)$ shown in Figure \ref{fig:negation}(b) is such
  that $\mathbf{N}_V(\mathcal{D})$ is a full-dimensional subset of
  $\mathbf{N}(\mathcal{G})$.  The graph $\mathcal{G}$ has the
  non-decomposable chain component $C = \{1,\ldots, 4\}$, with
  $A=\an(C)=\{5\}$.  In the digraph $\mathcal{D}$, we have
  $\tops_A(1,2) = \{10,11\}$ and $E_A(1,2) = \{10\rightarrow 9,
  11\rightarrow 1\}$.  The two edges in $E_A(1,2)$ are represented as
  dashed arrows in Figure \ref{fig:negation}(b).  The corresponding
  entries in $\Gamma\in\mathbb{R}^E$ are negated in the construction
  of $\Gamma'$ in (\ref{eq:edge-negation}).
\end{example}

Let $\mathcal{D}_{U\setminus A}$ be the subgraph induced by
$U\setminus A$.  For a set $Z\subseteq U\setminus A$, we let
\[
\an_{U\setminus A}(Z) = \bigcup_{u\in Z} \an_{U\setminus A}(u)
\]
be the ancestors of $Z$ in the induced subgraph
$\mathcal{D}_{U\setminus A}$.  According to the next lemma, the set of
$\tops_A(1,2)$ is ancestral in this induced subgraph.  Throughout the
rest of the section, d-connecting walks are always d-connecting given
$A$.

\begin{lemma}
  \label{lem:tops-ancestral-notA}
  The top nodes of d-connecting walks from $1$ to $2$ form an
  ancestral subset of $\mathcal{D}_{U\setminus A}$, that is,
  \[
  \an_{U\setminus A}\big(\tops_A(1,2)\big)\subseteq \tops_A(1,2).
  \]
\end{lemma}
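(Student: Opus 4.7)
The plan is to argue by induction on the length of a directed path from $w$ into $\tops_A(1,2)$ lying in $\mathcal{D}_{U\setminus A}$. The base case (length zero) is trivial since then $w$ itself is in $\tops_A(1,2)$, so it suffices to establish the following single-edge step: if $t\in\tops_A(1,2)$, if $w\to t\in E$, and if $w\notin A$, then $w\in\tops_A(1,2)$.

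To carry out this step, I would fix a d-connecting walk $\pi$ from $1$ to $2$ given $A$ with $\tp{\pi}=t$. By the definition of the top, the initial segment $\pi_1$ of $\pi$ from $1$ to the first node $x\in A\cup\{2\}$ visited is a trek with top $t$, yielding directed paths $P_L:t\to v^L_1\to\cdots\to v^L_l=1$ and $P_R:t\to v^R_1\to\cdots\to v^R_r=x$ whose non-endpoint vertices lie in $U\setminus A$. I will then construct a walk $\pi'$ from $1$ to $2$ by concatenating $P_L$ in reverse (from $1$ up to $t$), the edge $w\to t$ traversed against its orientation (arriving at $w$), the same edge traversed along its orientation (returning to $t$), $P_R$ (from $t$ down to $x$), and the tail $\pi_2$ of $\pi$ (from $x$ to $2$). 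The first four segments, taken together, realize a trek from $1$ to $x$ with top $w$ whose left side is $w\to t\to v^L_1\to\cdots\to 1$ and whose right side is $w\to t\to v^R_1\to\cdots\to x$.

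A short node-by-node check will confirm that every interior vertex of $\pi'$ preceding $x$ is a non-collider lying in $U\setminus A$ (at the two visits of $t$ one sees tail/head patterns, and at the single visit of $w$ a tail/tail pattern, since the edge $w\to t$ is used twice), and since from $x$ onward $\pi'$ coincides with the d-connecting walk $\pi$, $\pi'$ is d-connecting given $A$. Provided neither $w$ nor $t$ equals $2$, the first node in $A\cup\{2\}$ visited by $\pi'$ remains $x$, so $\tp{\pi'}=w$ and hence $w\in\tops_A(1,2)$. The main technical obstacle is verifying that the ``doubled back'' initial portion of $\pi'$ is a valid trek with distinct vertices on each side; this is the only place where acyclicity of $\mathcal{D}$ is invoked, and it reduces to the observation that any coincidence among the listed vertices on either side would combine with the edge $w\to t$ to produce a directed cycle. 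The boundary situation where $t=2$ admits a direct variant: one uses $P_L$ together with the single edge $w\to t=2$ to exhibit a trek from $1$ to $2$ with top $w$ straight away.
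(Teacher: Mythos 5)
Your argument is correct and is essentially the paper's: the paper likewise fixes a d-connecting walk with top $t$ and inserts at $t$ the trek $t\leftarrow\cdots\leftarrow w\to\cdots\to t$ obtained by doubling the directed path from $w$ to $t$, performing the whole insertion at once rather than edge by edge as in your induction. The only quibble is your $t=2$ variant: the walk $1\leftarrow\cdots\leftarrow 2\leftarrow w\to 2$ has top $2$ rather than $w$ under the paper's definition of the top of a d-connecting walk (its first node in $A\cup\{2\}$ is the interior occurrence of $2$), but this boundary case is vacuous, since $2\notin\tops_A(1,2)$ follows from the standing full-dimensionality assumption by the same argument that later shows $1\notin\tops_A(1,2)$, and the paper's own proof does not treat it either.
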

\begin{proof}
  As noted above $\tops_A(1,2)\cap A=\emptyset$.  Now suppose that $v\in\tops_A(1,2)$ and there exists 
  a directed path from $u$ to $v$ in $\mathcal{D}_{U\setminus A}$. Choose a d-connecting walk $\pi$ from $1$ to
  $2$ with $\tp{\pi}=v$.  At $v$, we may insert into $\pi$, the trek
  \[
  v \leftarrow \cdots \leftarrow u \to \cdots \to v
  \]
  that uses twice the directed path from $u$ to $v$ that exists in
  $\mathcal{D}_{U\setminus A}$.  The insertion yields a walk $\pi'$
  from $1$ to $2$ that is d-connecting and has $\tp{\pi'}=u$.
\end{proof}

Let $u,v\in[p]$.  Using Lemma~\ref{lem:std}, we may write the
determinants of $\Sigma_{uA,vA}$ and $\Sigma_{A,A}$ for $\Sigma$
from~(\ref{eq:represent-sigma}) and the analogous determinants for
$\Sigma'$ from~(\ref{eq:represent-sigma-negated}) as sums over systems
of treks without sided intersection.  The treks in these systems are
thus treks in $\mathcal{D}$ that are formed from two directed paths
that do not contain an edge $w\to x$ with source node $w\in A$.  We
refer to such paths as \emph{proper directed paths}.  If a proper
directed path ends with a target node not in $A$ then it is a directed path
in the induced subgraph $\mathcal{D}_{U\setminus A}$.


\begin{lemma}
  \label{thm:targetA}
  A proper directed path with a target in $A$ cannot have an
  edge in $E_A(1,2)$. 
\end{lemma}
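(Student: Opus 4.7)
The plan is to proceed by contradiction, exploiting the fact that $A=\an(C)$ is closed under taking ancestors in $\mathcal{D}$. Suppose, towards a contradiction, that there is a proper directed path $P$ with target $t\in A$ that contains an edge $u\to v$ belonging to $E_A(1,2)$.

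I would combine two observations. First, from the definition of $E_A(1,2)$ in~(\ref{eq:E12-chain-graph}), the endpoint $v$ lies in $\an_{U\setminus A}(1)\subseteq U\setminus A$, so in particular $v\notin A$. Second, since $t\in A$ and $v\notin A$ we have $v\neq t$, so the portion of $P$ that follows the edge $u\to v$ is a non-trivial directed path $v\to w_1\to\cdots\to w_{k-1}\to t$, witnessing $v\in\an(t)$. But $t\in A=\an(C)$, so $t$ has a directed path to some $c\in C$; concatenating this path onto the suffix of $P$ yields a directed path from $v$ to $c$, whence $v\in\an(c)\subseteq\an(C)=A$. This contradicts the first observation and completes the proof.

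The entire argument rests on the idempotence $\an(\an(C))=\an(C)$ of the ancestor operator together with the fact that membership of an edge $u\to v$ in $E_A(1,2)$ forces the child node $v$ to lie outside of $A$. I do not foresee any significant obstacle; the proof is essentially a two-line application of transitivity of ancestry, and one does not need to invoke Lemma~\ref{lem:tops-ancestral-notA} or any of the trek-system machinery for this particular step.
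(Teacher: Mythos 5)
There is a genuine gap, and it sits exactly where your argument claims to be trivial. The set $A=\an(C)$ consists of the ancestors of the chain component $C$ computed in the chain graph $\mathcal{G}=(V,D,B)$; it is a fixed subset of $V$, determined before the digraph $\mathcal{D}=(U,E)$ enters the picture, and the edge set $E$ of $\mathcal{D}$ bears no a priori relation to $D$. The proper directed path $P$ lives in $\mathcal{D}$, so its suffix only witnesses that $v$ is an ancestor of $t$ \emph{in $\mathcal{D}$}, while $t\in A$ only supplies a directed path from $t$ into $C$ \emph{in $\mathcal{G}$}. These two paths live in different graphs and cannot be concatenated, and ancestry in $\mathcal{D}$ does not place a node in $A$. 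In fact $A$ is generally not ancestrally closed in $\mathcal{D}$: any hidden node $h\in U\setminus V$ with an edge into some $\alpha\in A$ is an ancestor of $A$ in $\mathcal{D}$ yet cannot lie in $A\subseteq V$, and such configurations are unavoidable --- they are precisely what produce the treks between $A$-nodes with top nodes outside $A$ that appear in part (i) of the proof of Corollary~\ref{cor:AtoA}. So proper directed paths from $U\setminus A$ into $A$ do exist in general; the content of the lemma is only that such paths avoid $E_A(1,2)$, and the reason is the half of the definition of $E_A(1,2)$ that your argument never touches, namely that the head of the edge is \emph{not} in $\tops_A(1,2)$.

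The paper's proof uses exactly that half, and it does need the d-connection machinery. Writing $P: s\to\cdots\to u\to v\to\cdots\to t$ with $t\in A$ and $u\to v\in E_A(1,2)$: since $v\in\an_{U\setminus A}(1)$, there is a directed path from $v$ to $1$ avoiding $A$, which together with the suffix of $P$ forms a trek $1\leftarrow\cdots\leftarrow v\to\cdots\to t$ with no interior node in $A$. Since $u\in\tops_A(1,2)$ and the prefix of $P$ from $s$ to $u$ avoids $A$, Lemma~\ref{lem:tops-ancestral-notA} gives $s\in\tops_A(1,2)$, so there is a d-connecting walk from $1$ to $2$ with top $s$; splicing its tail onto the trek above via the reversal of $P$ yields a walk $1\leftarrow\cdots\leftarrow v\to\cdots\to t\leftarrow\cdots\leftarrow s\;\cdots\;2$ that is d-connecting given $A$ (the collider at $t$ is legitimate precisely because $t\in A$) and has top node $v$. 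Hence $v\in\tops_A(1,2)$, contradicting $u\to v\in E_A(1,2)$. Your observation that $v\notin A$ is correct and is implicitly used (it is what makes the spliced walk d-connecting), but it is not sufficient on its own, and your closing claim that Lemma~\ref{lem:tops-ancestral-notA} is dispensable here is not right.
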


\begin{proof}
  Let $\alpha\in A$, and suppose for contradiction that there exists a
  proper directed path $\pi$ from a node $u$ to $\alpha$ that contains
  an edge $v \to w \in E_A(1,2)$.  Hence,
  \[
  \pi: u \to \cdots \to v \to w \to \cdots \to \alpha.
  \]
  By definition of $E_A(1,2)$, we have $w \in \an_{U\setminus A}(1)$.
  Thus, there exists a trek $\tau$ in $\mathcal{D}$ of the form
  \[
  \tau: 1 \leftarrow \cdots \leftarrow w \rightarrow \cdots
  \rightarrow \alpha
  \]
  that does not have any interior nodes in $A$.
  
  Since $v \in \tops_A(1,2)$ by definition of $E_A(1,2)$, it follows
  from Lemma~\ref{lem:tops-ancestral-notA} that $u \in \tops_A(1,2)$.
  Thus there exists a d-connecting walk $\delta$ from 1 to 2 with
  $u = \tp{\delta}$.  Define $\delta'$ to be the subwalk of $\delta$
  from $u$ to 2 formed by removing the directed walk from $u$ to 1 at
  the beginning of $\delta$.  Concatenating $\tau$, the path $\pi$
  reversed, and $\delta'$ yields a d-connecting walk $\pi'$ from 1 to 2 of the form
  \[
  \pi': 1 \leftarrow \cdots \leftarrow w \rightarrow \cdots
  \rightarrow \alpha \leftarrow \cdots \leftarrow u \; \cdots\;2
  \]
  that has $w=\tp{\mathcal{\pi}'}$.
  Consequently, $w \in \tops_A(1,2)$, contradicting the assumption
  that $v \to w \in E_A(1,2)$.
\end{proof}


\begin{lemma}
  \label{thm:targetj}
  A proper directed path with a target in $\{2,\dots,p\}$ cannot have an
  edge in $E_A(1,2)$. 
\end{lemma}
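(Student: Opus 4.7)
Following the template of Lemma~\ref{thm:targetA}, I would assume for contradiction that $\pi: u \to \cdots \to v \to w \to \cdots \to j$ is a proper directed path with $j \in \{2, \ldots, p\}$ and $v \to w \in E_A(1, 2)$. The definition of $E_A(1, 2)$ gives a directed path $\rho: w \to \cdots \to 1$ in $\mathcal{D}_{U\setminus A}$, and the subpath of $\pi$ from $w$ to $j$ also lies in $\mathcal{D}_{U\setminus A}$ (since $\pi$ is proper and $j\in[p]$ is disjoint from $A$). Concatenating $\rho^{-1}$ with this subpath produces the trek
\[
\tau': 1 \leftarrow \cdots \leftarrow w \to \cdots \to j
\]
with no node in $A$. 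Thus $\tau'$ is a d-connecting walk from $1$ to $j$ given $A$ with $\tp{\tau'} = w$.

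From here the easy cases can be dispatched. For $j \in \{3, \ldots, p-1\}$, the chordlessness of the $p$-cycle gives $(1, j) \notin B_p$, so the correspondence recalled at the start of the proof of Proposition~\ref{prop:chain-graph-negation} forbids any d-connecting walk from $1$ to $j$ given $A$, contradicting the existence of $\tau'$. For $j = 2$, the trek $\tau'$ itself exhibits $w \in \tops_A(1, 2)$, directly contradicting the consequence $w \notin \tops_A(1, 2)$ of $v \to w \in E_A(1, 2)$.

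The case $j = p$ is the main obstacle, because $(1, p) \in B_p$ means $\tau'$ is perfectly consistent with the correspondence, and $\tau'$ certifies only $w \in \tops_A(1, p)$, which is not inconsistent with $w \notin \tops_A(1, 2)$. My plan here is to extend $\tau'$ to a d-connecting walk from $1$ to $2$ whose initial trek still has top $w$, thereby forcing $w \in \tops_A(1, 2)$ and closing the contradiction. By Lemma~\ref{lem:tops-ancestral-notA} applied along the initial segment $u \to \cdots \to v$ of $\pi$ (which lies in $\mathcal{D}_{U\setminus A}$), one gets $u \in \tops_A(1, 2)$, so there is a d-connecting walk $\delta$ from $1$ to $2$ with $\tp{\delta} = u$, and its subwalk from $u$ to $2$, denote it $\delta'$, is available. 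The natural splice $\tau' \cdot \pi^{-1} \cdot \delta'$ yielding $1 \leftarrow \cdots \leftarrow w \to \cdots \to p \leftarrow \cdots \leftarrow u \to \cdots \to 2$ fails at $p$, since $p \notin A$ turns into a forbidden collider there.

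The technical heart of the proof is therefore to reroute the splice so that $p$ is bypassed or traversed as a non-collider, while preserving $w$ as the top of the initial trek. I expect this to require exploiting additional structure: either an outgoing edge at $p$ within $\mathcal{D}$ whose target lies in $\an_{U\setminus A}(u)$ (replacing the collision at $p$ by an admissible collider further along, reachable in $A$ and usable as a pivot as in the proof of Lemma~\ref{thm:targetA}), or a symmetry argument combining the ancestrality of both $\tops_A(1, 2)$ and $\tops_A(1, p)$ with the observation that $v\in \tops_A(1,2)\cap \tops_A(1,p)$ to produce an alternative d-connecting walk realizing $w$ as top. Managing this rerouting without disturbing the top of the initial trek is the delicate step I anticipate being the hardest.
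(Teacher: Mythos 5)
Your treatment of the cases $j=2$ and $j\in\{3,\dots,p-1\}$ is correct and coincides with the paper's argument: the trek $\tau'\colon 1\leftarrow\cdots\leftarrow w\to\cdots\to j$ built from $w\in\an_{U\setminus A}(1)$ and the tail of $\pi$ either certifies $w\in\tops_A(1,2)$ (contradicting $v\to w\in E_A(1,2)$) or is a d-connecting walk between non-adjacent nodes of the chordless cycle.

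The case $j=p$, however, is left genuinely open: you identify the obstruction (the splice $\tau'\cdot\pi^{-1}\cdot\delta'$ creates a forbidden collider at $p\notin A$) but only sketch speculative ways to ``reroute'' around it, and none of them is carried out. Moreover, you are aiming at the wrong target. You do not need to force $w\in\tops_A(1,2)$ at all; the node $w$ plays no role in the contradiction. You have already established, via Lemma~\ref{lem:tops-ancestral-notA} applied to the initial segment of $\pi$, that $u\in\tops_A(1,2)$, hence there is a d-connecting walk $\delta$ from $1$ to $2$ with $\tp{\delta}=u$, and its subwalk $\delta'$ from $u$ to $2$ begins with an edge pointing away from $u$. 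Now simply concatenate the \emph{reversed} path $\pi^{-1}\colon p\leftarrow\cdots\leftarrow w\leftarrow v\leftarrow\cdots\leftarrow u$ with $\delta'$. The junction node $u$ is a non-collider (a source, with both incident edges pointing away) and $u\notin A$, and all interior nodes of $\pi^{-1}$ avoid $A$ because $\pi$ is proper and $p\notin A$; so the result is a d-connecting walk from $p$ to $2$ given $A$. Since $(2,p)\notin B_p$, this contradicts the correspondence between d-connecting walks in $\mathcal{D}$ and edges of $B_p$ recalled at the start of the proof of Proposition~\ref{prop:chain-graph-negation}. Note that this contradiction-producing walk is exactly the tail $p\leftarrow\cdots\leftarrow u\;\cdots\;2$ of your own failed splice: discarding the initial trek $1\leftarrow\cdots\leftarrow w\to\cdots\to p$, rather than trying to preserve it, is what closes the argument.
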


\begin{proof}
  Let $x\in \{2,\dots,p\}$, and suppose for contradiction that $v \to
  w \in E_A(1,2)$ is an edge in a proper directed path $\pi$ from a
  node $u$ to $x$.  Written in reverse, $\pi$ is
  \[
  x \leftarrow \cdots \leftarrow w \leftarrow v \leftarrow \cdots
  \leftarrow u. 
  \]
  We distinguish two cases based on whether $x=p$ or not.
  
  If $x\in\{2,\ldots,p-1\}$, then $\mathcal{D}_{U\setminus A}$
  contains a trek
  \[
  \tau: 1 \leftarrow \cdots \leftarrow w \rightarrow \cdots \rightarrow x,
  \]
  since $w\in\an_{U\setminus A}(1)$.  If $x=2$, then $\tau$ is
  d-connecting and, thus, $w\in \tops_A(1,2)$.  This contradicts the
  assumption that $v \to w \in E_A(1,2)$.  If $x\in\{3,\ldots ,p-1\}$,
  then $\tau$ is a d-connecting trek between two nodes that are not
  adjacent in the bidirected cycle $([p],B_p)$, which is again a
  contradiction.
  
  Finally, suppose that $x=p$.  Since $v \in \tops_A(1,2)$, we also
  have $u \in \tops_A(1,2)$ by the ancestrality property from
  Lemma~\ref{lem:tops-ancestral-notA}.  Hence, $\mathcal{D}$ contains
  a d-connecting walk $\delta$ from 1 to 2 such that $u =
  \tp{\delta}$.  Define $\delta'$ to be the subwalk of $\delta$ from
  $u$ to 2 formed by removing the directed path from $u$ to 1 at the
  beginning of $\delta$.  Concatenating $\pi$ and $\delta'$ yields a
  walk
  \[
  \pi': p \leftarrow \cdots \leftarrow w \leftarrow v \leftarrow
  \cdots \leftarrow u \; \cdots \; 2, 
  \]
  from $p$ to $2$ that is d-connecting.  This is a contradiction since
  $(2,p)\notin B_p$.
\end{proof}



\begin{lemma}
  \label{thm:nonemptylhs}
  Let $\Pi:\{1\}\cup A\rightrightarrows \{2\}\cup A$ be a system of
  treks without sided intersection, and let $\tau_1$ be the trek in
  $\Pi$ with source 1.  Then $\lhs{\tau_1} \neq \{1\}$,
  $\tp{\tau_1}\neq 1$, and there is a proper directed path from
  $\tp{\tau_1}$ to 1.  Moreover, $\tp{\tau_1}\in\tops_A(1,2)$.
\end{lemma}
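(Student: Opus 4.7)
The plan is to establish the four assertions in tandem, relying on the no-sided-intersection hypothesis on $\Pi$, the characterization of subdeterminants via systems of treks from Lemma~\ref{lem:std}, and the cycle-adjacency dictionary arising from the full-dimensional subset hypothesis, namely that $(u,v) \in B_p$ if and only if there exists a system of treks from $\{u\}\cup A$ to $\{v\}\cup A$ in $\mathcal{D}$ without sided intersection (and similarly for d-connecting walks via Lemma~\ref{lem:dconnect}).

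First I would prove the proper directed path statement. Write $\tau_1$ in the form $1 \leftarrow v_1^L \leftarrow \cdots \leftarrow t \to v_1^R \to \cdots \to y$ where $t=\tp{\tau_1}$ and $y\in\{2\}\cup A$. Reading $\lhs{\tau_1}$ forward gives a directed walk $t \to v_{l-1}^L \to \cdots \to 1$, which by acyclicity of $\mathcal{D}$ is a directed path. For each $\alpha\in A$, the source of $\tau_\alpha$ lies in $\lhs{\tau_\alpha}$, so the no-sided-intersection hypothesis forces $\alpha \notin \lhs{\tau_1}$. Thus no vertex on the directed path $t\to\cdots\to 1$ lies in $A$, and the path is a proper directed path from $t$ to 1; in particular $t\notin A$.

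Next I would establish $\tp{\tau_1}\in\tops_A(1,2)$ by exhibiting a d-connecting walk from 1 to 2 given $A$ whose top is $t$. The candidate is obtained by reversing the proper directed path just constructed to obtain the initial segment $1\leftarrow\cdots\leftarrow t$, and then concatenating with a walk from $t$ to 2. When $y=2$ and no interior node of $\rhs{\tau_1}$ lies in $A$, the trek $\tau_1$ itself is d-connecting given $A$ with top $t$. Otherwise, I would splice in segments of the other treks $\tau_\alpha\in\Pi$ to either bypass $A$-nodes of $\rhs{\tau_1}$ or convert them into colliders on the walk; since each $\tau_\alpha$ has $\alpha\in A$ as its source, $\alpha$ carries an incoming arrowhead that can be matched to another incoming arrowhead to realize $\alpha$ as a collider, and no-sided-intersection guarantees that the spliced pieces are compatible.

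The remaining claims $\lhs{\tau_1}\neq\{1\}$ and $\tp{\tau_1}\neq 1$ are equivalent, so it suffices to rule out $t=1$. I would argue by contradiction: if $t=1$ then $\tau_1$ is a directed path $1\to\cdots\to y$ in $\mathcal{D}$, and, combining it with the treks $\tau_\alpha\in\Pi$ ($\alpha\in A$) and a suitable trek-based connection from $y$ (or from 2) to 3, one builds a system of treks from $\{1\}\cup A$ to $\{3\}\cup A$ without sided intersection. Such a system cannot exist because $(1,3)\notin B_p$ for $p\ge 4$, which via Lemma~\ref{lem:std} and the full-dimensional subset hypothesis gives the contradiction. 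The main obstacle I anticipate is the construction of a d-connecting walk with top exactly $t$ when $\tau_1$ itself fails to be d-connecting because $\rhs{\tau_1}$ passes through $A$; re-routing the walk while preserving both d-connection (colliders in $A$, non-colliders outside $A$) and the top node requires a careful combinatorial gluing argument using the other treks in $\Pi$.
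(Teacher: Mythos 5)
Your handling of the proper-directed-path claim is correct and matches the paper: no sided intersection forces $\alpha\notin\lhs{\tau_1}$ for every $\alpha\in A$ (since $\alpha\in\lhs{\tau_\alpha}$), so the left-hand side of $\tau_1$ read forward is a proper directed path from $\tp{\tau_1}$ to $1$. The genuine gap is in your contradiction argument for $\lhs{\tau_1}\neq\{1\}$. You propose to build a system of treks from $\{1\}\cup A$ to $\{3\}\cup A$ without sided intersection by ``combining'' $\Pi$ with a trek-based connection from $y$ or from $2$ to $3$, but no such combination is available: concatenating a trek ending at $2$ with a trek starting at $2$ generically creates a collider at $2$, which is not in $A$, so the result is neither a trek nor a d-connecting walk, and composing two trek systems without sided intersection does not yield one (if it did, adjacency of $1\bi 2$ and $2\bi 3$ would force a nonzero $\sigma_{13.A}$, contradicting the whole setup). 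The paper avoids this by aiming at the \emph{other} non-adjacency, $(2,p)\notin B_p$: it first turns $\tau_1$ and the remaining treks of $\Pi$ into a d-connecting walk $\pi$ from $1$ to $2$ that, under the hypothesis $\lhs{\tau_1}=\{1\}$, \emph{begins with an edge pointing out of $1$}; it then prepends a d-connecting walk from $p$ to $1$ (which exists because $1\bi p\in B_p$). The junction is at node $1$, where the outgoing edge guarantees $1$ is a non-collider, so the concatenation is d-connecting from $p$ to $2$ --- contradiction. This directional control at the junction node is exactly what your route to node $3$ lacks.

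Two further points. First, your derivation of $\tp{\tau_1}\in\tops_A(1,2)$ rests on the same splicing construction you flag as the main obstacle, and your stated mechanism is flawed in an edge case: the source $\alpha\in A$ of $\tau_\alpha$ carries an incoming arrowhead only when $\lhs{\tau_\alpha}\neq\{\alpha\}$; if $\lhs{\tau_\alpha}=\{\alpha\}$ the trek leaves $\alpha$ with an outgoing edge and $\alpha$ cannot be realized as a collider in the way you describe. (To be fair, the paper also treats the appending of the treks of $\Pi$ into a d-connecting walk as routine, so you have correctly located the delicate step; but you neither resolve it nor give a correct local argument for it.) Second, once $\lhs{\tau_1}\neq\{1\}$ is established, the paper obtains $\tp{\tau_1}\in\tops_A(1,2)$ directly as the top of the appended walk $\pi$, so the two remaining claims really are consequences of the single contradiction argument rather than requiring a separate gluing construction as in your outline.
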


\begin{proof}
  Clearly, $1 \in \lhs{\tau_1}$.  Suppose for contradiction that
  $\lhs{\tau_1}=\{1\}$.  Then $\tp{\tau_1}=1$, implying that $\tau_1$
  is a directed path beginning at 1.  Appending to $\tau_1$ other
  treks in $\Pi$, we may form a d-connecting walk $\pi$ from
  $1$ to $2$ that begins with a directed edge $1\to \cdot\,$ pointing away
  from $1$.
  
  Since $p$ and $1$ are adjacent in the bidirected cycle $([p], B_p)$,
  the graph $\mathcal{D}$ contains a d-connecting walk $\pi'$ from $p$
  to 1.  Appending $\pi$ to $\pi'$ yields a d-connecting walk $\delta$
  from $p$ to 2, which is a contradiction because $(2,p)\not\in B_p$.
  We conclude that $\lhs{\tau_1} \neq \{1\}$, and the other claims are
  immediate consequences.  In particular,
  $\tp{\tau_1}=\tp{\pi}\in\tops_A(1,2)$.
\end{proof}


\begin{lemma}
  \label{thm:Tto1}
  A proper directed path with target 1 and source in $\tops_A(1,2)$
  has exactly one edge in $E_A(1,2)$.
\end{lemma}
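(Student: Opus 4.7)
The plan is to analyze the sequence of nodes $w_0,w_1,\ldots,w_k$ along the path $\pi: u=w_0\to w_1\to\cdots\to w_k=1$ (with $u\in\tops_A(1,2)$) and show that membership in $\tops_A(1,2)$ flips exactly once, pinpointing a unique edge in $E_A(1,2)$. First I would record two preliminary observations. Since $\pi$ is proper, no edge of $\pi$ has a source in $A$; together with $1\notin A$ (which follows from $A\subseteq V\setminus[p]$) this gives $w_i\in U\setminus A$ for every $i$. Consequently, the suffix $w_i\to w_{i+1}\to\cdots\to 1$ is a directed path in $\mathcal{D}_{U\setminus A}$, so $w_i\in\an_{U\setminus A}(1)$. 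Unpacking~(\ref{eq:E12-chain-graph}) then shows that $w_i\to w_{i+1}\in E_A(1,2)$ if and only if $w_i\in\tops_A(1,2)$ and $w_{i+1}\notin\tops_A(1,2)$.

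The hardest part will be to prove that $1\notin\tops_A(1,2)$. I would argue by contradiction: suppose $\pi'$ is a d-connecting walk from $1$ to $2$ with $\tp{\pi'}=1$. By the definition of the top node of a d-connecting walk, the first nontrivial trek of $\pi'$ runs from $1$ to the first node $z\in A\cup\{2\}$ visited by $\pi'$ and has top $1$; hence it is a directed path $1\to v_1\to\cdots\to z$. Since $p\ge 4$ and $([p],B_p)$ is a chordless $p$-cycle, the pair $(p,1)$ lies in $B_p$ while $(p,2)$ does not, so the d-connection/adjacency equivalence noted at the start of the proof of Proposition~\ref{prop:chain-graph-negation} provides a d-connecting walk $\sigma$ from $p$ to $1$. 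The walk $\sigma$ necessarily ends with an edge into $1$, and $\pi'$ begins with the edge $1\to v_1$, so the concatenation $\sigma\cdot\pi'$ has a non-collider at $1$ lying outside $A$. Hence the concatenation is a d-connecting walk from $p$ to $2$, contradicting $(p,2)\notin B_p$. Therefore $w_k=1\notin\tops_A(1,2)$.

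To conclude, I would invoke Lemma~\ref{lem:tops-ancestral-notA}: for each $i$ the node $w_i$ is an ancestor of $w_{i+1}$ in $\mathcal{D}_{U\setminus A}$, so if $w_{i+1}\in\tops_A(1,2)$ then $w_i\in\tops_A(1,2)$. Contrapositively, once the sequence $w_0,w_1,\ldots,w_k$ leaves $\tops_A(1,2)$, it cannot return. Combined with $w_0=u\in\tops_A(1,2)$ and $w_k\notin\tops_A(1,2)$, this forces exactly one index $i^*$ with $w_{i^*}\in\tops_A(1,2)$ and $w_{i^*+1}\notin\tops_A(1,2)$, so $\pi$ contains the unique edge $w_{i^*}\to w_{i^*+1}$ from $E_A(1,2)$.

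The main obstacle is the claim $1\notin\tops_A(1,2)$, which requires unpacking the recursive definition of the top node of a d-connecting walk and carefully verifying d-connection at the junction $1$ of the concatenated walk $\sigma\cdot\pi'$ --- in particular handling both the case $z=2$ (where $\pi'$ is itself a directed path from $1$ to $2$) and the case $z\in A$ (where $\pi'$ uses $z$ as a collider before continuing toward $2$).
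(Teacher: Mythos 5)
Your proof is correct and follows essentially the same route as the paper: establish $1\notin\tops_A(1,2)$ by the concatenation argument with a d-connecting walk from $p$ to $1$ (the paper delegates this to the argument in Lemma~\ref{thm:nonemptylhs}), then use the ancestrality of $\tops_A(1,2)$ from Lemma~\ref{lem:tops-ancestral-notA} to locate the unique ``exit edge'' along $\pi$. One tiny remark: your claim that $\sigma$ ``necessarily ends with an edge into $1$'' is neither justified nor needed --- the junction node $1$ is a non-collider simply because $\pi'$ leaves it along $1\to v_1$.
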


\begin{proof}
  First, note that $1\notin \tops_A(1,2)$.  Indeed, if
  $1\in\tops_A(1,2)$, then there is a d-connecting walk from $1$ to
  $2$ that begins with a directed edge pointing away from $1$.  The
  arguments in the proof of Lemma \ref{thm:nonemptylhs} then lead to a
  contradiction.
  
  Now, let $\pi$ be any proper directed path from a node $u\in
  \tops_A(1,2)$ to 1.  As we traverse $\pi$ from $u$ to 1, there must
  exist some first node, say $w$, that is not in $\tops_A(1,2)$.  Let
  $v$ be the node that immediately precedes $w$ on $\pi$, and note
  that $v\in \tops_A(1,2)$.   Then $v\to w\in E_A(1,2)$.
  
  By Lemma~\ref{lem:tops-ancestral-notA}, $\tops_A(1,2)$ is ancestral
  with respect to $\mathcal{D}_{U\setminus A}$.  Hence, no descendant
  of $w$ along $\pi$ is in $\tops_A(1,2)$, for else we would have
  $w\in\tops_A(1,2)$.  We conclude that, along $\pi$, every node from
  $u$ to $v$ is in $\tops_A(1,2)$, and no node from $w$ to 1 is in
  $\tops_A(1,2)$.  Hence, $v\to w$ is the only edge in $\pi$
  that is in $E_A(1,2)$.
\end{proof}

The preceding lemmas yield the following corollary about
subdeterminants of the matrices $\Sigma$ and $\Sigma'$
from~(\ref{eq:represent-sigma})
and~(\ref{eq:represent-sigma-negated}), respectively.


\begin{corollary}
  \label{cor:AtoA}
  The matrices $\Sigma$ and $\Sigma'$ from~(\ref{eq:represent-sigma})
  and~(\ref{eq:represent-sigma-negated}) satisfy
  \begin{enumerate}
  \item $\det(\Sigma'_{A,A})=\det(\Sigma_{A,A})$,
  \item $\det(\Sigma'_{1A,2A})=-\det(\Sigma_{1A,2A})$, and
  \item $\det(\Sigma'_{uA,vA})=\det(\Sigma_{uA,vA})$ for all pairs
  $(u,v)\in ([p]\times [p])\setminus \{(1,2),(2,1)\}$.
  \end{enumerate}
\end{corollary}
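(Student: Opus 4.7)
The plan is to expand both $\det(\Sigma_{uA,vA})$ and $\det(\Sigma'_{uA,vA})$ using Lemma \ref{lem:std} as signed sums over trek systems $\Pi:\{u\}\cup A\rightrightarrows\{v\}\cup A$ without sided intersection. Since $\Gamma'$ differs from $\Gamma$ only by negations on edges in $E_A(1,2)$, each contribution satisfies $\prod_{\tau\in\Pi}\sigma'(\tau) = (-1)^{N(\Pi)}\prod_{\tau\in\Pi}\sigma(\tau)$, where $N(\Pi)$ counts the $E_A(1,2)$-edges traversed in $\Pi$. The whole corollary thus reduces to controlling the parity of $N(\Pi)$ case by case. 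Every trek decomposes into two proper directed paths joined at a top; by Lemmas \ref{thm:targetA} and \ref{thm:targetj}, any such path whose endpoint lies in $A\cup\{2,\ldots,p\}$ contributes $0$ edges to $N(\Pi)$. For paths ending at $1$, Lemma \ref{thm:Tto1}, combined with the ancestrality Lemma \ref{lem:tops-ancestral-notA} (whose contrapositive shows that such a path has no $E_A(1,2)$-edges when its top lies outside $\tops_A(1,2)$), gives that the path contributes exactly $1$ edge if its top is in $\tops_A(1,2)$ and $0$ otherwise.

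Parts (1), (2), and the easy sub-cases of (3) then follow immediately. In (1), all sources and targets lie in $A$, so $N(\Pi)=0$ always. In (2), the trek $\tau_1$ with source $1$ has its top in $\tops_A(1,2)$ by Lemma \ref{thm:nonemptylhs} and thus contributes one $E_A(1,2)$-edge, while all other paths terminate in $\{2\}\cup A$ and contribute $0$; so $N(\Pi)=1$ always and $\det(\Sigma'_{1A,2A})=-\det(\Sigma_{1A,2A})$. In (3), when $u,v\in\{2,\ldots,p\}$ no path targets $1$ and $N(\Pi)=0$, and when $u,v\in[p]$ are non-adjacent in the bidirected cycle $([p],B_p)$, Lemma \ref{lem:zero-schur-complement} applied to both $\Sigma,\Sigma'\in\mathbf{N}(\mathcal{G})$ forces both determinants to vanish.

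The main obstacle is the three pairs $(1,p),\,(p,1),\,(1,1)$ of part (3), where directed paths ending at $1$ really do appear. I intend to handle them by a walk-splicing argument. For $(1,p)$, suppose for contradiction that the top $t$ of $\tau_1$ lies in $\tops_A(1,2)$. If $\tau_1$'s target is $p$, then $\tau_1$ itself is a d-connecting walk $1\to p$ with top $t$; if instead the target is some $\alpha\in A$, then $\tau_1$ chains through the $A$-collider structure of $\Pi$ into a d-connecting walk $1\to p$ with top $t$ anyway. Reversing the right arm of that walk (from $p$ back up to $t$) and concatenating with the right arm of a witnessing d-connecting walk from $1$ to $2$ (from $t$ down to $2$) yields a d-connecting walk from $p$ to $2$ given $A$, contradicting $(p,2)\notin B_p$ (for $p\ge 4$) via Lemmas \ref{lem:dconnect} and \ref{lem:zero-schur-complement} applied through $\mathbf{N}_V(\mathcal{D})\subseteq\mathbf{N}(\mathcal{G})$. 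Hence $t\notin\tops_A(1,2)$ and $N(\Pi)=0$; the case $(p,1)$ is symmetric. For $(1,1)$, if the trek with source $1$ coincides with the trek with target $1$ then its two paths share one top, contributing $0$ or $2$ edges; otherwise the two treks are distinct with tops $t_s$ and $t_t$, and a parallel splicing---beginning with the trek with target $1$ as the initial trek at $1$, chaining through the $A$-colliders of $\Pi$ to the target of the trek with source $1$, traversing that trek's right path in reverse up to $t_s$, and finishing with the right arm of a walk witnessing $t_s\in\tops_A(1,2)$---produces a d-connecting walk $1\to 2$ whose top is $t_t$, proving $t_s\in\tops_A(1,2)\Leftrightarrow t_t\in\tops_A(1,2)$ and hence even parity of $N(\Pi)$. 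The most delicate step throughout will be verifying d-connection at every intermediate node of these spliced walks: internal meetings must lie in $A$ as colliders, while the top nodes where pieces join must lie outside $A$ with both incident arrows outgoing.
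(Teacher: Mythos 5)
Your proposal follows the same strategy as the paper's proof: expand each determinant via Lemma~\ref{lem:std}, note that the contribution of a trek system $\Pi$ is multiplied by $(-1)^{N(\Pi)}$ where $N(\Pi)$ counts traversals of $E_A(1,2)$-edges, and control the parity of $N(\Pi)$ through Lemmas~\ref{thm:targetA}, \ref{thm:targetj}, \ref{thm:nonemptylhs} and \ref{thm:Tto1}; your handling of parts (1), (2) and of the pairs $(1,p),(p,1)$ reproduces the paper's splicing argument almost verbatim. The one substantive difference is the diagonal pair $(1,1)$. The paper folds it into the first sub-case of (iii) with the assertion that all relevant trek systems decompose into proper directed paths with targets in $A\cup\{2,\dots,p\}$; that assertion does not hold for $(1,1)$, since the trek with source $1$ and the trek with target $1$ each contribute an arm whose target is $1$. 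Your separate treatment --- an even count of $0$ or $2$ when those two treks coincide, and a splicing argument showing $t_s\in\tops_A(1,2)\Leftrightarrow t_t\in\tops_A(1,2)$ when they are distinct --- supplies exactly the argument the paper's wording skips over, so your write-up is, if anything, more complete than the published proof on this point. The delicacies you flag at the end (collider status at splice points, and that the spliced walk's initial trek really has the intended top node) are real but are present at the same level of informality in the paper's own arguments for Lemma~\ref{lem:tops-ancestral-notA} and for the case $(1,p)$, so they do not constitute a gap specific to your approach.
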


\begin{proof}
  In each case, we may appeal to Lemma~\ref{lem:std} and consider
  systems of treks without sided intersection.
  
  (i) Suppose $\Pi:A\rightrightarrows A$ is a system of treks without
  sided intersection.  Since there is no sided intersection, every
  trek in $\Pi$ must be the concatenation of two proper directed
  paths, each with target in $A$.  By Lemma~\ref{thm:targetA}, no trek
  in $\Pi$ contains an edge in $E_A(1,2)$.  Hence, no trek involves an
  edge $x\to y$ whose coefficient $\lambda_{xy}$ is negated when
  replacing $\Sigma$ by $\Sigma'$.


  
  (ii) Suppose $\Pi:\{1\} \cup A\rightrightarrows \{2\} \cup A$ is a
  system of treks without sided intersection.  Since there is no sided
  intersection, every trek in $\Pi$ must be the concatenation of two
  proper directed paths.  Exactly one of these proper directed paths
  has target 1 and, thus, contains exactly one edge in $E_A(1,2)$, by
  Lemma~\ref{thm:nonemptylhs} and Lemma~\ref{thm:Tto1}.  All other
  proper directed paths have target in $A$ or target 2.  By
  Lemma~\ref{thm:targetA} and Lemma~\ref{thm:targetj}, none of these
  paths contains an edge in $E_A(1,2)$.  We conclude that $\Pi$
  contains precisely one edge in $E_A(1,2)$.  Therefore, its
  contribution to the sum in Lemma~\ref{lem:std} is negated when
  replacing $\Sigma$ by $\Sigma'$.


  
  (iii) First, consider the case that $(u,v)\notin\{(1,p),(p,1)\}$.
  Then all relevant systems of treks without sided intersection are
  made up of proper directed paths with targets in $A$ or
  $\{2,\dots,p\}$.  By Lemma~\ref{thm:targetA} and
  Lemma~\ref{thm:targetj}, no edge from $E_A(1,2)$ appears in these
  paths and the claim follows.
  
  Finally, suppose that $(u,v)\in\{(1,p),(p,1)\}$, and let $\Pi:\{u\}
  \cup A\rightrightarrows \{v\} \cup A$ be a system of treks without
  sided intersection.  Splitting each trek in the system at its top
  node gives a collection of proper directed paths, of which exactly
  one path has target node 1.  If the source of this path, $x$, is in
  $\tops_A(1,2)$, then there is a d-connecting walk, $\delta$ from 1
  to 2 as well as a d-connecting walk $\delta'$ from 1 to $p$ such
  that $\tp{\delta}=\tp{\delta'}=x$.  Traversing $\delta'$ backwards
  from $p$ to $x$ and then traversing $\delta$ from $x$ to 2 traces
  out a d-connecting walk from $p$ to $2$, which is a contradiction
  since $p$ and $2$ are not adjacent in the bidirected cycle
  $([p],B_p)$.  Consequently, the proper directed path with target
  node 1 does not have its source in $\tops_A(1,2)$.  By the
  ancestrality from Lemma~\ref{lem:tops-ancestral-notA}, none of the
  edges in this path will be in $E_A(1,2)$.  Hence, no edge appearing
  in $\Pi$ has a coefficient that is negated when replacing $\Sigma$
  by $\Sigma'$.
\end{proof}


\section{Discussion}
\label{sec:conclusion}

\begin{figure}[t]
  \centering
  \includegraphics[scale=0.25]{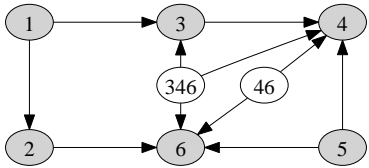} 
  \caption{A minimal Gaussian causal interpretation for the chain graph $\mathcal{G}$ in Figure \ref{fig:canonical}(a). The causality index of $\mathcal{G}$ is 2.}
  \label{fig:minimal}
\end{figure}

This paper provides a sufficient condition for a mixed graph and its associated Gaussian model to admit a strict causal interpretation in terms of acyclic digraphs with additional nodes that correspond to hidden Gaussian variables.   For chain graphs, we show the necessity of the condition.  An obvious follow-up problem would be to characterize strict Gaussian causality for more general classes of mixed graphs.  The ancestral graphs of \cite{richardson:2002} would be a natural starting point.

If a mixed graph $\mathcal{G}=(V,D,B)$ is strictly Gaussian causal, then there are infinitely many acyclic digraphs $\mathcal{D}=(U,E)$, $U\supseteq V$, that induce a hidden variable model $\mathbf{N}_V(\mathcal{D})=\mathbf{N}(\mathcal{G})$.  It would be interesting to study just how many hidden variables really need to be introduced for this equality of models.  To formalize the question, define the \emph{(Gaussian) causality index} of a mixed graph to be the minimum number $h$ such that there exists an acyclic digraph $\mathcal{D}$ on nodes $U\supseteq V$ with $|U|=|V|+h$ and $\mathbf{N}_V(\mathcal{D})=\mathbf{N}(\mathcal{G})$.  If $\mathcal{G}$ is not strictly Gaussian causal, set $h=\infty$.  The question is then whether we can efficiently determine the causality index of a general mixed graph.  To give an example, 
 the chain graph $\mathcal{G}$ in Figure \ref{fig:canonical}(a) has causality index two.  Figure \ref{fig:minimal} depicts an acyclic digraph $\mathcal{D}$ with two latent variables that induces a hidden variable model equal to the mixed graph model. 
For graphs $\mathcal{G}$ with causality index $\infty$ it would furthermore be interesting to determine inequalities that hold on hidden variable models $\mathbf{N}_V(\mathcal{D})$ that are full-dimensional subsets of $\mathbf{N}(\mathcal{G})$.  The `positive definiteness after sign change' that we use in this paper is one example of such an inequality.  For other work on inequality constraints, see \cite{evans:2012,kang:tian:2006} and references therein.


All of the above work considers the concrete setting of hidden variable models that are based on an assumption of joint multivariate normality of all variables, observed and hidden.  Alternatively, it would be interesting to treat a non-parametric version of the considered problem. To explain, let $\mathcal{G}=(V,D,B)$ be again a mixed graph with Gaussian model $\mathbf{N}(\mathcal{G})$.  For an acyclic digraph $\mathcal{D}=(U,E)$ with $U\supseteq V$ define $\mathbf{P}(\mathcal{D})$ to be set of all probability distributions on $\mathbb{R}^U$ that satisfy the conditional independence relations associated with $\mathcal{D}$; compare \cite{lauritzen:1996}.  Then define $\mathbf{P}_V(\mathcal{D})$ to be the set of all distributions on $\mathbb{R}^V$ that arise as $V$-marginal of a distribution in $\mathbf{P}(\mathcal{D})$.  Writing $\mathbf{N}(V)$ for the set of all multivariate normal distributions on $\mathbb{R}^V$ (with positive definite covariance matrix), we may then ask whether 
\begin{equation}
\label{eq:nonpara}
\mathbf{P}_V(\mathcal{D})\cap \mathbf{N}(V) \;= \; \mathbf{N}(\mathcal{G}).
\end{equation}
While the model $\mathbf{P}(\mathcal{D})$ is certainly much larger than its subset $\mathbf{N}(\mathcal{D})=\mathbf{P}(\mathcal{D})\cap \mathbf{N}(U)$, it is not clear to us that considering the model equality in~(\ref{eq:nonpara}) should give anything new.  In particular, the L\'evy-Cram\'er theorem \cite[\S8.8]{pollard:2002} suggests that when considering structural equations as in~(\ref{eq:sem}) one would have to consider non-linear equations in order to generate distributions in $\mathbf{P}_V(\mathcal{D})\cap \mathbf{N}(V)$ that are not already in $\mathbf{N}_V(\mathcal{D})$.  If it were the case that 
a strict causal interpretation cannot always be found via the addition of hidden non-Gaussian variables, then some Gaussian mixed graph models would be larger than needed for modeling of causally induced stochastic dependencies.


\section*{Acknowledgments}

This work was supported by the NSF under Grant No.~DMS-0746265.
Mathias Drton was also supported by an Alfred P. Sloan Fellowship.

\bibliographystyle{amsalpha} 
\bibliography{mixed_equal}

\end{document}